\documentclass[psamsfonts,a4paper]{amsart}

\usepackage{amssymb,amsfonts}
\usepackage{amsaddr}
\usepackage[all,arc]{xy}
\usepackage{enumerate}
\usepackage{tikz}
\usetikzlibrary{matrix}
\usepackage{parskip}
\usepackage[margin=2.5cm]{geometry}
\usepackage{eucal}


\newtheorem{thm}{Theorem}[section]


\newtheorem{prop}[thm]{Proposition}
\newtheorem{lemma}[thm]{Lemma}
\newtheorem{conj}[thm]{Conjecture}

\theoremstyle{definition}
\newtheorem{defn}[thm]{Definition}

\theoremstyle{remark}

\makeatletter
\let\c@equation\c@thm
\makeatother
\numberwithin{equation}{section}



\DeclareMathOperator{\aut}{Aut}

\DeclareMathOperator{\Hom}{Hom}

\DeclareMathOperator{\tr}{Tr}

\newcommand{\C}{\mathbb{C}}

\newcommand{\R}{\mathbb{R}}
\newcommand{\Q}{\mathbb{Q}}
\newcommand{\Z}{\mathbb{Z}}

\newcommand{\CC}{\mathcal{C}}
\newcommand{\HH}{\mathcal{H}}

\newcommand{\Y}{\mathcal{Y}}
\newcommand{\CF}{\mathcal{F}}

\newcommand{\g}{\mathfrak{g}}
\newcommand{\h}{\mathfrak{h}}

\newcommand{\al}{\alpha}
\newcommand{\ga}{\gamma}
\newcommand{\G}{\Gamma}
\newcommand{\D}{\Delta}

\newcommand{\la}{\lambda}
\newcommand{\La}{\Lambda}
\newcommand{\om}{\omega}

\newcommand{\Om}{\Omega}

\newcommand{\ov}{\overline}

\newcommand{\vir}{\text{Vir}}

\newcommand{\twobytwo}[4]
{\left(\begin{smallmatrix} #1 & #2 \\ #3 & #4 \end{smallmatrix}\right)}

\newcommand{\bigtwobytwo}[4]
{\left(\begin{matrix} #1 & #2 \\ #3 & #4 \end{matrix}\right)}

\DeclareMathOperator{\sign}{sign}

\DeclareMathOperator{\id}{id}
\DeclareMathOperator{\ev}{ev}
\DeclareMathOperator{\coev}{coev}
\DeclareMathOperator{\qdim}{qdim}

\title[Lattices, Vertex Algebras, and Modular Categories]{Lattices, Vertex Algebras, and Modular Categories}
\author[*]{Jethro van Ekeren}


\begin{document}

\begin{center}
{\LARGE \bf Lattices, Vertex Algebras and Modular Categories} \par \bigskip

\renewcommand*{\thefootnote}{\fnsymbol{footnote}}
{\normalsize
Jethro van Ekeren\footnote{email: \texttt{jethrovanekeren@gmail.com}}
}

\par \bigskip

{\footnotesize Instituto de Matem\'{a}tica e Estat\'{i}stica \\ Universidade Federal Fluminense (UFF) \\ Niter\'{o}i RJ, Brazil}

\par \bigskip
\end{center}

\vspace*{8mm}

\noindent
\textbf{Abstract.} 
In this note we give an account of recent progress on the construction of holomorphic vertex algebras as cyclic orbifolds as well as related topics in lattices and modular categories. We present a novel computation of the Schur indicator of a lattice involution orbifold using finite Heisenberg groups and discriminant forms.
\vspace*{8mm}

\section{Introduction}

Vertex algebras formalise the calculus of operator product expansions in two dimensional conformal field theory (CFT) \cite{BPZ} \cite{B86}. Various structures of CFT, such as fusion rules, may be analysed in terms of the representation theory of the corresponding vertex algebra \cite{FZ92} \cite{HLtensorIandII}. In particular a vertex algebra $V$ is said to be holomorphic if the set of fusion rules of the corresponding CFT is as simple as possible, i.e., if all $V$-modules are simply direct sums of copies of $V$ itself.

The problem of classifying holomorphic vertex algebras resembles the problem of classifying even self dual lattices. In both cases strong constraints arise from the theory of modular forms. For example, the central charge of a holomorphic vertex algebra must be divisible by $8$, just as the rank of an even self dual lattice must be divisible by $8$.

Let $L$ be an even lattice of rank $r$. Then there is a vertex algebra $V_L$ of central charge $c=r$ canonically associated with $L$, and the irreducible representations of this lattice vertex algebra are indexed by the discriminant form $L^\vee / L$. In particular if $L$ is self dual then $V_L$ is holomorphic. In physical terms $V_L$ corresponds to the theory of strings in the torus $\R^r / L$.

It turns out that all holomorphic vertex algebras of central charges $c=8$ and $c=16$ are lattice vertex algebras \cite{DMaff04}, but that for central charge $c=24$ the situation is different. Indeed there are precisely $24$ self dual even lattices, but there are more than $24$ holomorphic vertex algebras. The most prominent among these non lattice holomorphic vertex algebras is the moonshine module $V^\natural$, whose symmetry group is the sporadic finite simple group known as the monster \cite{FLM}.

The original construction of $V^\natural$ from the lattice vertex algebra $V_\La$ associated with the Leech lattice $\La$ is an example of a cyclic orbifold (more specifically a $\Z/2$-orbifold). This construction mirrors a classical procedure for constructing new lattices from old ones known as Kneser's neighbour construction \cite{Kneser}. It is known that any even self dual lattice can be obtained from any other of the same rank by repeated application of the neighbour construction.

Conjecturally a similar state of affairs holds for vertex algebras (at least in the case of central charge $24$). Indeed the known holomorphic vertex algebras of central charge $24$ are $71$ in number and the list of such is conjectured to be complete \cite{S93}. Since all these vertex algebras are constructed as iterated ($\Z/2$- and more generally $\Z/n$-) orbifolds of lattice vertex algebras, and since the cyclic orbifold construction is invertible (see Section \ref{section.ab.grp.coho}), it would follow that any of these vertex algebras may be obtained from any other by repeated application of the cyclic orbifold construction.

In this note we give an account of the theory of $\Z/n$-orbifolds of holomorphic vertex algebras developed by S. M\"{o}ller, N. Scheithauer, and the author \cite{EMS15} \cite{EMS17}, along with background material on lattices, vertex algebras and tensor categories. We discuss in particular a subtle invariant of self-dual modules over a group or algebra known as the Schur indicator. In a sense the vertex algebra Schur indicator measures the Fermi/Bose statistics of an orbifold. We compute some Schur indicators in terms of classical objects such as finite Heisenberg groups and discriminant forms.

\newpage

\emph{Acknowledgements.} I would like to thank the organisers of the conference ``String Geometries and Dualities'' IMPA 2016, as well as my collaborators S. M\"{o}ller and N. Scheithauer. I would also like to thank T. Creutzig and S. Kanade for useful discussions, and the referee for comments on the manuscript.

\section{Vertex Algebras}

The rational function $1/(z-w)$ may be expanded as a Laurent series in positive powers of $w$ whose coefficients are negative powers of $z$, or vice versa. These two expansions are naturally viewed as elements of the rings $\C((z))((w))$ and $\C((w))((z))$ respectively.

A vertex algebra is an algebraic structure in which differently ordered and differently parenthesised products are related to each other in roughly the same way that the two expansions of $1/(z-w)$ above are. Formally a vertex algebra is a vector space $V$ together with infinitely many products
\[
a \otimes b \mapsto a_{(n)}b \quad \text{for $n \in \Z$}
\]
such that $a_{(n)}b = 0$ for any fixed $a, b \in V$ and sufficiently large and positive $n \in \Z$ depending on $a$ and $b$. We gather these products into a generating function as follows
\[
  Y : V \otimes V \rightarrow V((z)), \quad a \otimes b \mapsto Y(a,z)b = \sum_{n \in \Z} a_{(n)}b z^{-n-1},
\]
the operator valued series $Y(a, z)$ is known as the field associated with $a \in V$. The products are to satisfy the condition that for all $a, b, c \in V$ the three terms
\[
Y(a, z)Y(b, w)c, \quad Y(b, w)Y(a, z)c, \quad \text{and} \quad Y(Y(a, z-w)b, w)c
\]
be images of a unique element of $V[[z, w]][z^{-1}, w^{-1}, (z-w)^{-1}]$ under the following natural maps
\begin{align*}
\xymatrix{
& & V[[z, w]][z^{-1}, w^{-1}, (z-w)^{-1}] \ar@{->}[dll] \ar@{->}[d] \ar@{->}[drr] & & \\
V((z))((w)) & & V((w))((z)) & & V((z-w))((w)). \\
}
\end{align*}
We depict this axiom schematically as
\begin{align}\label{assoc.rel}
Y(a, z)Y(b, w)c \sim Y(b, w)Y(a, z)c \sim Y(Y(a, z-w)b, w)c.
\end{align}
Additionally the vertex operation $Y$ is assumed to have a unit element $\mathbf{1}$ which satisfies $Y(\mathbf{1}, z) = 1_V$ and $\left.Y(a,z)\mathbf{1}\right|_{z=0} = a$, and a translation operator $T : V \rightarrow V$ which satisfies $[T, Y(a,z)] = \partial_z Y(a, z)$ and $T\mathbf{1}=0$ \cite{KacVA}.

There are deep links between vertex algebras and algebraic curves, but for these notions to make contact one needs to be able to interpret the formal variable $z$ as a coordinate \cite{zhu1994global} \cite{FBZ}, and to do this one needs to be able to say how $Y(a, z)$ varies depending on changes of the coordinate. This dependence is encoded in terms of a conformal structure \cite{Huang.2dCFT.Book}. A \emph{conformal structure} on a vertex algebra $V$ is an element $\om \in V$ whose associated field
\[
  Y(\om, z) = L(z) = \sum L_n z^{-n-2}
\]
furnishes $V$ with a representation of the Virasoro Lie algebra $\vir = \bigoplus_{n\in \Z} \C L_n \oplus \C C$, namely
\[
[L_m, L_n] = (m-n)L_{m+n} + \delta_{m,-n} \frac{m^3-m}{12}C, \quad \text{and} \quad [C, \vir]=0.
\]
The representation is to be ``compatible'' in the sense that
\begin{itemize}
\item $T = L_{-1}$,

\item The action of $C$ is by a constant $c$ called the central charge of $V$,

\item The vertex algebra is $\Z_+$-graded by eigenvalues of $L_0$:
\[
V = V_0 \oplus V_1 \oplus V_2 \oplus \cdots.
\]
\end{itemize}
In this paper all vertex algebras are assumed to be conformal.

There are several variations on these definitions. For instance the $\Z_+$-grading might be relaxed to a $\Z$-grading, a $\tfrac{1}{2}\Z$-grading, or a more general grading. Also important is the notion of vertex superalgebra $V = V^0 \oplus V^1$, in which the axiom (\ref{assoc.rel}) is modified so that elements of the odd part $V^1$ anticommute rather than commute.

It remains to mention that there are several technical conditions on a vertex algebra which we shall need to impose at one point or another to ensure the validity of our statements. For the sake of flow we avoid presenting many theorems under their most general hypotheses, while to maintain precision we define once and for all a \emph{tame} vertex algebra to be one that is rational (see Section \ref{module.def}), $C_2$-cofinite, simple, self-contragredient, and of CFT-type. All these conditions are standard in the vertex algebra literature.

\section{Modules and Twisted Modules}\label{module.def}

Just as for a Lie algebra, the notions of module and intertwining operator between modules make sense for vertex algebras \cite{FHL}. Thus a module over the vertex algebra $V$ consists of a vector space $M$ together with a map $Y^M : V \otimes M \rightarrow M((z))$ satisfying a relation
\[
Y^M(Y(a, z-w)b, w) m \sim Y^M(a, z)Y^M(b, w)m \sim Y^M(b, w)Y^M(a, z) m
\]
analogous to (\ref{assoc.rel}). In particular $Y$ equips $V$ itself with the structure of a $V$-module. If every $V$-module is decomposable into a direct sum of irreducible $V$-modules then we say that $V$ is rational.

An intertwining operator between $V$-modules $M, N, R$ is a map
\begin{align}\label{int.def}
\Y : M \otimes N \rightarrow z^{\varepsilon} \cdot R((z)),
\end{align}
compatible with the three $V$-actions as follows
\[
\Y(Y^M(a, z-w)m, w)n \sim Y^R(a, z)\Y(m, w)n \sim \Y(m, w)Y^N(a, z)n.
\]
The reader might wonder why the factor $z^\varepsilon$ has been inserted in (\ref{int.def}). It turns out that an irreducible module $M$ over a tame vertex algebra $V$ possesses a grading
\begin{align}\label{module.grading}
M = \bigoplus_{n \in \Z_+} M_{\rho + n},
\end{align}
by finite dimensional eigenspaces of the Virasoro operator $L^M_0$. The number $\rho = \rho(M)$ is called the \emph{conformal weight} of $M$. By {\cite[Theorem 11.3]{DLM00}} the conformal weight is necessarily rational. We abuse terminology by also refering to the important $\Q/\Z$-valued invariant
\[
\D(M) = \rho(M) \pmod{1}
\]
as the conformal weight of $M$. Compatibility of Virasoro algebra actions dictates that nontrivial intertwining operators can only exist if
\[
\varepsilon \equiv \D(M) + \D(N) - \D(R).
\]
The vector space of intertwining operators $\Y$ as in (\ref{int.def}) is denoted $\binom{R}{M \,\, N}$.

It is actually useful to incorporate fractional exponents, such as that which appears in (\ref{int.def}), directly into the definition of $V$-module. More precisely if $V$ comes equipped with a $\Z/n$-grading $V = \bigoplus_{k \in \Z/n} V^k$, then one may demand that for $a \in V^k$, $m \in M$ the series $Y^M(a, z) m$ take the form $z^{k/n} \cdot M((z))$.

After choosing a primitive $n^{\text{th}}$ root of unity, a $\Z/n$-grading becomes the same thing as an automorphism of order $n$. Thus it is equivalent to speak of \emph{$g$-twisted} $V$-modules, where $g$ is a finite order automorphism of $V$. The subspace $V^g \subset V$ of $g$-invariant vectors is a vertex subalgebra, and by restriction any $g$-twisted $V$-module becomes a $V^g$-module in the ordinary sense. Conversely if $V^\la$ denotes in general the $g$-eigenspace with eigenvalue $e^{2\pi i \la}$ then the restriction
\[
V^\la \otimes V^\mu \rightarrow V^{\la + \mu},
\]
of the vertex operation $Y$ to $V^\la \otimes V^\mu$ yields a simple example of intertwining operator among $V^g$-modules.

A technical condition concerning the conformal weights of $V$-modules will be required in the sequel. We call the tame vertex algebra $V$ \emph{positive} if the conformal weight $\rho(M)$ of every irreducible $V$-module $M$ (with the exception of $V$ itself) is positive.

\section{Integral Lattices and Discriminant Forms}

An integral lattice is a free abelian group $L$ of finite rank, equipped with a symmetric bilinear form $(\cdot,\cdot) : L \times L \rightarrow \Z$. Unless otherwise stated, we assume all lattices to be positive definite and nondegenerate. The lattice $L$ is even if the norm $\al^2=(\al,\al)$ of $\al$ is an even number for every $\al \in L$.

The dual
\[
  L^\vee = \{x \in \Q \otimes_\Z L | \text{$(x, \al) \in \Z$ for all $\al \in \Z$}\},
\]
obviously contains $L$, and the quotient group $D = L^\vee / L$ is finite. If $L$ is even then $D$ carries an induced {quadratic form} $q : D \rightarrow \Q / \Z$, defined by
\[
q(\al) = \al^2/2 \pmod{1}.
\]
The pair $(D, q)$ is called the discriminant form of $L$, where in general a \emph{discriminant form} is by definition a finite abelian group $E$ together with a function $q : E \rightarrow \Q / \Z$ such that $q(n\al) = n^2q(\al)$ for all $n \in \Z$, and such that $(\cdot,\cdot) : E \times E \rightarrow \Q$ defined by
\[
  (\al, \beta) = q(\al+\beta)-q(\al)-q(\beta)
\]
is a nondegenerate, bilinear form.

\begin{defn}
A subgroup $I$ of a discriminant form $(E, q)$ is called \emph{isotropic} if $(\al,\beta) = 0$ for all $\al, \beta \in I$. It is called \emph{totally isotropic} if $q(\al)=0$ for all $\al \in I$.
\end{defn}

\section{Lattice Vertex Algebras}

The simplest examples of tame vertex algebras are constructed from even lattices \cite{FK80} \cite{B86}.

Let $L$ be an even integral lattice, and denote by $\h$ the vector space $\C \otimes_\Z L$ equipped with the induced bilinear form $(\cdot,\cdot)$. The Heisenberg algebra associated with $\h$ is the infinite dimensional Lie algebra
\[
\widehat{\h} = \h((t)) \oplus \C \mathbf{1}, \quad \text{where} \quad [ht^m, ht^n] = \delta_{m, -n} (h, h') \mathbf{1}, \quad \text{and} \quad [\mathbf{1}, \widehat{\h}] = 0.
\]
The Fock module $\CF_\mu$ is the highest weight $\widehat{\h}$-module generated by the highest weight vector $\left|\mu\right>$ on which $t\h[t]$ acts trivially and $h \in \h$ acts as the scalar $(\mu, h)$. As a vector space the lattice vertex algebra associated with $L$ is the direct sum of Fock modules
\[
V_L = \bigoplus_{\al \in L} \CF_\al.
\]
For the construction of the vertex operation $Y$ of $V_L$ we refer the reader to the textbook \cite{KacVA}. The central charge of $V_L$ is just the rank of $L$.

Let $\mu \in L^\vee/L$. Then a slight modification of the construction of $V_L$ can be used to furnish the direct sum
\[
V^{\mu} = \bigoplus_{\al \in \mu+L} \CF_\al
\]
with the structure of a $V_L$-module. This $V_L$-module is irreducible, and in fact this construction yields a bijection from the discriminant form $L^\vee/L$ to the set of irreducible $V_L$-modules.

Now let $g$ be an automorphism of $L$, and consider the assignment $\left|\al\right> \mapsto \eta(\al) \left|g \cdot \al\right>$ for some system of coefficients $\eta : L \rightarrow \C^\times$. If $\eta$ satisfies a certain cocycle condition, coming from compatibility with the vertex operation, then the assignment extends uniquely to an automorphism of $V_L$. If the restriction of $\eta$ to the fixed point sublattice $L^g$ is trivial then we call this automorphism a \emph{standard lift} of $g$. It turns out that any two standard lifts of $g \in \aut{L}$ are conjugate in $\aut(V_L)$.

\section{Even Self Dual Lattices}

We say that a lattice $L$ is self dual if $L^\vee = L$. The conditions of self duality, evenness, and positive definiteness, although easily satisfied in isolation, combine to put remarkable constraints on a lattice. In particular the rank of such a lattice must be divisible by $8$. The smallest nontrivial example is
\[
E_8 = \left\{ (x_1,\ldots,x_8) \in \Z^8 \cup (\tfrac{1}{2}+\Z)^8 \Big| {\textstyle\sum_i} x_i \in 2\Z \right\},
\]
which is in fact the unique even self dual lattice in rank $8$.

In rank $16$ there are two even self dual lattices: the orthogonal sum $E_8 \oplus E_8$, as well as another known as $D_{16}^+$. In rank $24$ there are $24$ even self dual lattices, known collectively as the Niemeier lattices. In rank $32$ there are at least $10^7$, and beyond this point the number grows very rapidly with rank.

Let $L$ be an even self dual lattice, let $b \in L$, and put
\[
L_b = \left\{ x \in L | (b, x) \in 2\Z \right\}.
\]
Up to equivalence there are two possibilities for the discriminant form $(D, q)$ of $L_b$. The two possibilities are represented graphically in the following figure.
\begin{center}
\begin{tikzpicture}

\tikzset{
quangleprod/.style={
    matrix of nodes,
    column sep=-\pgflinewidth, row sep=-\pgflinewidth,
    nodes={draw,
      minimum height=#1,
      anchor=center,
      text width=#1,
      align=center, 
      inner sep=0pt 
    },
  },
  quangleprod/.default=1cm
}

\matrix[quangleprod]
{
$0$ & $1/4$ \\
$0$ & $3/4$ \\
};

\matrix[quangleprod] at (6, 0)
{
$0$ & $0$ \\
$0$ & $1/2$ \\
};
\end{tikzpicture}
\end{center}
In either case $D \cong \Z/2 \times \Z/2$, the top left cell is the identity element $[L_b] \in D$, the addition in $D$ is the obvious one, and the number in each cell represents the value of $q$. The two possibilities arise, respectively, in the cases $b^2 \equiv 2 \pmod{4}$ and $b^2 \equiv 0 \pmod{4}$.

If for some nonzero $a \in D$ we have $q(a) = 0$, then the union of cosets
\[
L_b \cup (a+L_b)
\]
is again an even self dual lattice. In the first case above the union of cosets corresponding to the left column simply recovers the original lattice $L$. In the second case we obtain $L$ again from the left column (say), but we obtain a \emph{new} even self dual lattice, which we denote by $L^\circ$, from the top row.

The procedure $L \mapsto L^\circ$ is Kneser's neighbour construction \cite{Kneser} \cite{Bor.thesis}. Depending on the choice of $b$ the new lattice $L^\circ$ might or might not be isomorphic to $L$. Kneser showed that any even self dual lattice of given rank can be obtained from any other by iterated application of the construction. Since in rank $8k$ we already have the even self dual lattice $E_8^{\oplus k}$ as a starting point, we can construct all even self dual lattices in principle.

\section{The Fusion Product}\label{sec.fusion}

Let $L_1$ and $L_2$ be two integrable modules of level $k \in \Z_+$ over the affine Kac-Moody algebra
\[
  \widehat{\g} = \g((t)) \oplus \C K.
\]
The tensor product $L_1 \otimes L_2$ is again a $\widehat{\g}$-module, but now of level $2k$. It emerged from the work of physicists on conformal field theory that there is another tensor product between integrable $\widehat{\g}$-modules, this one preserving the level \cite{KZ}. This tensor product
\[
L_1, L_2 \mapsto  L_1 \boxtimes L_2
\]
was shown by Kazhdan and Lusztig to be equivalent in a certain sense to the tensor product of modules over a quantum group at a root of unity \cite{KL1}.

There is a similar tensor product between representations of a tame vertex algebra \cite{MS89} \cite{HLtensorIandII}. Let us consider the (complexified) Grothendieck group $\CF(V)$ of the category of representations of the tame vertex algebra $V$. The tensor product induces a commutative ring structure $*$ on $\CF(V)$, which we call the \emph{fusion algebra} of $V$. For irreducible $V$-modules $A$ and $B$ one has
\begin{align}\label{ten.fus}
[A] * [B] = \sum_{C} \mathcal{N}_{A, B}^C \cdot [C]
\end{align}
where the sum runs over isomorphism classes of irreducible $V$-modules $C$. The structure constants $\mathcal{N}_{A, B}^C \in \Z_+$ are known as the fusion rules of $V$. The fusion rules are related to intertwining operators by the relation
\begin{align}\label{fus.int}
\mathcal{N}_{A, B}^C = \dim\binom{C}{A \,\,\, B}.
\end{align}

Let us see what this structure looks like in the case of the lattice vertex algebra $V_L$. We have noted that the irreducible $V_L$-modules are parametrised by the discriminant form $D = L^\vee/L$. In fact the group structure of $D$ mirrors the fusion product of $\CF(V_L)$, i.e.,
\begin{align*}
V^\la \boxtimes V^\mu \cong V^{\la+\mu}.
\end{align*}
Furthermore the conformal weight of $V^{\mu}$ is given by
\[
  \D(V^{\mu}) = q(\mu).
\]
So the category of representations of $V_L$ can be viewed as a sort of categorification of the discriminant form $(D, q)$ of $L$.

\section{Modular Forms}\label{Section.Zhu}

Let $V$ be a tame vertex algebra and $M$ an irreducible $V$-module graded as in (\ref{module.grading}). We consider its graded dimension
\[
\chi_M = q^{-c/24} \sum_{n=0}^\infty q^{\rho+n} \dim(M_{\rho+n}).
\]
Zhu showed in \cite{Z96} that the series $\chi_M$ converges to a holomorphic function of $\tau \in \HH$ where $q = e^{2\pi i \tau}$ and $\HH$ is the upper half complex plane. Furthermore the action of the modular group $SL_2(\Z)$ on $\HH$ by fractional linear transformations preserves the span of the graded dimensions of the irreducible $V$-modules. To be more precise the vector space $\CF(V)$ carries a representation $\rho_V$ of $SL_2(\Z)$, the operator $\rho_V(A)$ for $A \in SL_2(\Z)$ can be written as a matrix $\rho_{M, N}\left(A\right)$ relative to the basis of $\CF(V)$ consisting of classes of irreducible $V$-modules, and the characters of $V$ satisfy
\[
\chi_M\left(A \cdot \tau\right) = \sum_{N} \rho_{M, N}\left(A\right) \chi_N.
\]

Now let $L$ be an even lattice of rank $r$ and discriminant form $D$. The theta function $\Theta_\mu$ of $\mu \in D$ and the Dedekind eta function $\eta$ are defined by
\[
\Theta_\mu = \sum_{\al \in \mu + L} q^{\al^2/2} \quad \text{and} \quad \eta = q^{1/24} \prod_{n=1}^\infty \left( 1-q^n \right).
\]
The character of the irreducible $V_L$-module $V^\mu$ is $\chi_{V^\mu} = \Theta_\mu / \eta^r$. Zhu's representation $\rho_V$ of $SL_2(\Z)$ on $\CF(V_L) \cong \C[D]$ is determined by the action of generators $T = \twobytwo 1101$ and $S = \twobytwo{0}{-1}{1}{0}$ as follows.
\begin{align}\label{Zhu.representation.lattice}
\begin{split}
T [\al] &= e^{2\pi i \left( q(\al) - r/24 \right)} [\al] \\
S [\al] &= \frac{1}{\sqrt{\# D}} \sum_{[\beta] \in D} e^{-2\pi i (\al,\beta)} [\beta].
\end{split}
\end{align}
The representation $\rho_V$ is closely related to the \emph{Weil representation} $\rho_D$. Indeed $\rho_V$ is the tensor product of $\rho_D$ with a character \cite{Borch.grass}.

If the lattice $L$ is self dual then the lattice vertex algebra $V_L$ has only a single irreducible module, namely itself. In general if a tame vertex algebra enjoys this property then we say it is \emph{holomorphic}.

Now we consider the graded dimension of a holomorphic vertex algebra $V$. Zhu's result states that $\chi_V$ is a modular function with possible character. If the central charge is $24$ it easily follows that the only possibility is for $\chi_V$ to coincide with the classical $j$ function up to an additive constant, i.e.,
\[
\chi_V(q) = q^{-1} + \dim(V_1) + 196884 q + 21493760 q^2 + \cdots.
\]
Thus modular invariance puts stringent conditions on a holomorphic vertex algebra.

\section{Fusion Algebras of Fixed Point Vertex Algebras}\label{fps.fusion}

In Section \ref{sec.fusion} we saw that for the lattice vertex algebra $V_L$ the irreducible modules are parametrised by the abelian group $D = L^\vee/L$, and that the fusion product is given by addition in $D$.

In general let us say that the tame vertex algebra $V$ has \emph{grouplike fusion} if it enjoys a similar property, that is, if there is a parametrisation
\[
\al \mapsto W^\al
\]
of the set of irreducible $V$-modules by an abelian group $E$ such that
\[
W^\al \boxtimes W^\beta \cong W^{\al+\beta}
\]
for all $\al, \beta \in E$.

Now let $V$ be a holomorphic vertex algebra and $g$ an automorphism of $V$ of order $n$. It has been proved by Miyamoto and collaborators that the fixed point subalgebra $V^g$ is tame and possesses $n^2$ irreducible modules \cite{Miyamoto.Tanabe} \cite{Miyamoto.C2} \cite{Miyamoto.Carnahan}.

A theorem of Dong-Li-Mason \cite{DLM00} asserts that there exists a unique irreducible $g$-twisted $V$-module, which we denote by $V(g)$. Theorem 5.11 of {\cite{EMS15}} asserts that the grading (\ref{module.grading}) of $V(g)$ takes the form
\begin{align}\label{V(g).grading}
V(g) = \bigoplus_{j\in \Z_+} V(g)_{r/n^2 + j/n}
\end{align}
for some integer $r$ defined modulo $n$.
\begin{defn}
  If $V(g)$ carries the grading (\ref{V(g).grading}) then we say that the automorphism $g$ has \emph{type} $n\{r\}$.  
\end{defn}
The first main theorem of \cite{EMS15} asserts that the fixed point algebra $V^g$ has grouplike fusion (see also \cite{DRX}), and that the structure of its fusion algebra $\CF(V^g)$ depends only on the type of $g$.

Before stating the theorem we recall that group extensions
\begin{align}\label{Zn.extension}
0 \rightarrow \Z/n \rightarrow E \rightarrow \Z/n \rightarrow 0
\end{align}
are classified up to equivalence by the invariant $d \in \Z/n$ defined as follows. If $i$ is a generator of $\Z/n$ and $\widehat{i}$ its lift from the right hand copy of $\Z/n$ to $E$, then $\widehat{i}^n$ lies in the kernel of the projection, and hence is the image of $d \cdot i$ for some $d \in \Z$ defined modulo $n$. In other words $H^2(\Z/n, \Z/n) \cong \Z/n$.
\begin{thm}[\cite{EMS15}]\label{thm.fus.grp}
Let $V$ be a tame holomorphic vertex algebra, and $g$ an automorphism of $V$ of type $n\{r\}$. Let $E$ be the extension \textup{(}\ref{Zn.extension}\textup{)} with invariant $d=2r$. Then there is a parametrisation
\[
\al \mapsto W^\al
\]
of the set of irreducible $V$-modules by $E$ such that for all $\al, \beta \in E$ one has
\[
W^\al \boxtimes W^\beta \cong W^{\al+\beta}.
\]
\end{thm}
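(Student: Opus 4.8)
The plan is to realise the $n^2$ irreducible $V^g$-modules concretely, to organise them by two commuting $\Z/n$-gradings recording the twist sector and the residual $g$-eigenvalue, to show that fusion is additive in both, and finally to read off the extension invariant from the conformal weights. First I would produce the modules: by Dong--Li--Mason there is for each $j \in \Z/n$ a unique irreducible $g^j$-twisted $V$-module $V(g^j)$, with $V(g^0)=V$; a standard lift of $g$ acts on each $V(g^j)$, and the resulting eigenspace decomposition $V(g^j) = \bigoplus_{k \in \Z/n} W^{(j,k)}$ yields, by the theorems of Miyamoto and collaborators, exactly the $n^2$ irreducible $V^g$-modules. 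The untwisted pieces $W^{(0,k)} = V_k$ are just the $g$-eigenspaces of $V$ itself.

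Next I would establish that fusion is grouplike. The untwisted modules $W^{(0,k)}$ close up under fusion into a group isomorphic to $\Z/n$, namely the character group of $\langle g \rangle$, and each is invertible; fusing $W^{(0,1)}$ with any module shifts its eigenvalue label by one, so this $\Z/n$ permutes the $n$ modules of a given twist sector simply transitively. Since $V$ is holomorphic, orbifold theory identifies $\CF(V^g)$ as a pointed modular tensor category of rank $n^2$ (equivalently, its global dimension is $n^2$); as there are $n^2$ simple objects each of quantum dimension at least one, every $W^{(j,k)}$ must be invertible. Hence fusion is grouplike, with underlying abelian group $E$ of order $n^2$. The twist label defines a surjection $E \to \Z/n$ whose kernel $\{W^{(0,k)}\}$ is a copy of $\Z/n$, presenting $E$ as an extension (\ref{Zn.extension}).

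It remains to compute $d$. Equip $E$ with the quadratic form $q(W^\al) = \D(W^\al)$; since $\CF(V^g)$ is pointed, $q$ is a nondegenerate discriminant form with associated bilinear form $(\al,\beta) = q(\al+\beta)-q(\al)-q(\beta)$. Let $a = [W^{(0,1)}]$ generate the kernel and let $b=[W^{(1,0)}]$ be a lift of the quotient generator chosen to have minimal conformal weight, so that $nb = da$ by the definition of the invariant. The untwisted modules have integer conformal weight, whence $q(a)=0$; the type hypothesis together with the grading (\ref{V(g).grading}) gives $q(b) = \D(V(g)) = r/n^2$; and since the simple current $a$ raises the conformal weight of a twisted module by $1/n$, one finds $(a,b)=1/n$. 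Computing $(nb,b)$ in two ways finishes the argument: on one side $(nb,b) = n(b,b) = 2n\,q(b) = 2r/n$, while on the other $(nb,b) = (da,b) = d\,(a,b) = d/n$, so that $d \equiv 2r \pmod n$.

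The main obstacle is the grouplike-fusion step: the decomposition of the twisted modules into irreducible $V^g$-modules, and above all the control of their fusion products, rest on the rationality, $C_2$-cofiniteness, and modular-tensor structure of $\CF(V^g)$, which is where the genuine vertex-algebraic work (twisted intertwining operators, modular invariance, and the pointedness of the orbifold category) is concentrated. Once pointedness is known and $q$ is identified with the conformal weight, the determination $d=2r$ reduces to the short bilinear computation above.
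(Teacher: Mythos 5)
Your outline follows essentially the same route as the source \cite{EMS15} which the paper cites for this theorem (the paper itself gives no proof, only the remark following the statement and the machinery of Sections \ref{Section.Zhu} and \ref{sec.mod.cat}): realise the $n^2$ simple $V^g$-modules as eigenspaces inside the twisted modules $V(g^j)$, show every simple object is invertible so that fusion is grouplike, and then extract the extension invariant from the conformal weights. Your closing computation $(nb,b)=2nq(b)=2r/n$ versus $(da,b)=d/n$ is exactly the content of the discriminant form of the lattice with Gram matrix $\twobytwo{-2r}{n}{n}{0}$ appearing in the theorem that follows, so that part is right. The one step you pass over too quickly is ``each of quantum dimension at least one'': this is not automatic in a modular tensor category (quantum dimensions of simple objects need not be real, let alone $\geq 1$), and it is precisely here that either the positivity hypothesis on $V^g$ must enter (via the quantum dimension results quoted in Section~\ref{sec.mod.cat}, which identify categorical and analytic quantum dimensions and are what make the bound $\qdim(M)\geq 1$ available), or else, as in \cite{EMS15}, one computes the relevant $S$-matrix entries directly from the modular invariance of twisted trace functions and reads off $\qdim(M)=1$ from the Verlinde formula. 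Likewise the assertion that the global dimension of the orbifold category is $n^2$ is itself a theorem requiring the rationality and $C_2$-cofiniteness of $V^g$, not a formality. With those inputs acknowledged the argument is sound; the residual sign ambiguity in $(a,b)=\pm 1/n$ (your ``minimal conformal weight'' normalisation does not fix it) is harmless, since the extensions with invariants $d$ and $-d$ are isomorphic after adjusting the choice of generators.
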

We remark that each twisted $V$-module $V(g^i)$ is naturally a $V^g$-module and as such splits into a direct sum of $n$ irreducible modules, and in this way the $n^2$ irreducible $V^g$-modules are obtained from the $n$ twisted $V$-modules. In Theorem \ref{thm.fus.grp} the subgroup $\Z/n \subset E$ parametrises those $V^g$-modules which occur inside the untwisted module $V$ itself. The projection $E \rightarrow \Z/n$ indexes within which twisted $V$-module the $V^g$-module occurs as a submodule.

The following theorem gives a more complete description of the structure of $\CF(V^g)$ in terms of the type of $g$ by including the conformal weights of the irreducible modules.
\begin{thm}
Let $V$ be a tame holomorphic vertex algebra, and $g$ an automorphism of $V$ of type $n\{r\}$. Let $(D, q)$ be the discriminant form of the indefinite lattice $L$ defined by
\[
L = \Z \oplus \Z \quad \text{with Gram matrix} \quad \bigtwobytwo{-2r}{n}{n}{0}.
\]
Then there is a bijection from $D$ to the set of irreducible $V^g$-modules, which we denote $\al \mapsto W^\al$, such that for all $\al, \beta \in D$ one has
\[
W^\al \boxtimes W^\beta \cong W^{\al+\beta} \quad \text{and} \quad \D(W^\al) = q(\al).
\]
\end{thm}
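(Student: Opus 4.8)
The plan is to combine the grouplike fusion already furnished by Theorem \ref{thm.fus.grp} with an explicit computation of the discriminant form $(D,q)$, thereby reducing the statement to an identification of two quadratic forms on the same finite group. First I would compute the dual lattice. Let $G = \bigtwobytwo{-2r}{n}{n}{0}$ be the Gram matrix, write $e_1,e_2$ for the given basis of $L$ and $f_1,f_2$ for the dual basis; inverting $G$ gives $f_1 = \tfrac1n e_2$ and $f_2 = \tfrac1n e_1 + \tfrac{2r}{n^2} e_2$, so that $|D| = |\det G| = n^2$ and $D = L^\vee/L$ is generated by the classes of $f_1$ and $f_2$. The class of $f_1$ generates a cyclic subgroup of order $n$, the $e_1$-coefficient defines a surjection $D \to \Z/n$ with this subgroup as kernel, and since $n f_2 \equiv 2r f_1 \pmod L$ the resulting extension $0 \to \Z/n \to D \to \Z/n \to 0$ has invariant $d = 2r$. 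Thus $D \cong E$ in the sense of Theorem \ref{thm.fus.grp}, and transporting the parametrisation $\al \mapsto W^\al$ across this isomorphism yields the asserted bijection together with $W^\al \boxtimes W^\beta \cong W^{\al+\beta}$.

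It remains to match the conformal weight with $q$. Since $V^g$ is tame its module category is a modular tensor category, and grouplike fusion makes it pointed; the ribbon twist $\theta_{W^\al} = e^{2\pi i \D(W^\al)}$ of a pointed modular category defines a quadratic form, so $\D : D \to \Q/\Z$ is a quadratic form. Because a $\Q/\Z$-valued quadratic form is determined by its values on a generating set together with its induced bilinear form, it suffices to evaluate $\D$ on $f_1$, $f_2$, and $f_1 + f_2$ and compare with the direct computation $q(m_1 f_1 + m_2 f_2) = \tfrac{m_1 m_2}{n} + \tfrac{r m_2^2}{n^2}$, which follows from $(f_1,f_1)=0$, $(f_1,f_2)=\tfrac1n$, $(f_2,f_2)=\tfrac{2r}{n^2}$.

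The remaining input is the location of the relevant modules. The subgroup $\langle f_1\rangle$ indexes the $V^g$-modules occurring inside $V$ itself; these are the $g$-eigenspaces of $V$, which inherit the integral $L_0$-grading of $V$, so $\D$ vanishes on $\langle f_1\rangle$, matching $q$ there. The coset $f_2 + \langle f_1\rangle$ indexes the $V^g$-modules occurring inside the $g$-twisted module $V(g)$, whose grading (\ref{V(g).grading}) has bottom weight $r/n^2$ and successive pieces at $r/n^2 + j/n$; reading off the conformal weights of these pieces gives $\D(f_2) = r/n^2$ and $\D(f_1 + f_2) = r/n^2 + 1/n$, hence a bilinear pairing $b(f_1,f_2) = 1/n$, again in agreement with the formula for $q$. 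Two quadratic forms on $D$ agreeing on $f_1$, $f_2$ and $f_1+f_2$ coincide, giving $\D(W^\al) = q(\al)$ for all $\al$.

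I expect the genuine obstacle to lie in the last paragraph: identifying which $g$-eigenspace of $V(g)$ realises the group element $f_1 + f_2$ rather than some other element of its coset, which requires the compatibility between the $g$-action and the $L_0$-grading on the twisted module, and is exactly what pins down the bilinear pairing $b(f_1,f_2)=1/n$. An alternative that avoids the pointed-category input is to establish the full formula $\D(W^{(i,j)}) = \tfrac{ij}{n} + \tfrac{ri^2}{n^2}$ directly from the weights of every twisted module $V(g^i)$, but this trades the clean two-generator argument for a determination of $\rho(V(g^i))$ for all $i$, where the bulk of the technical work would reside.
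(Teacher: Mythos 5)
Your proposal is correct in substance, but note that the paper itself states this theorem without proof --- it is imported from \cite{EMS15} --- so the comparison is with the argument given there and sketched in the surrounding text. Your lattice computation is exactly how the statement is meant to be unpacked: the Gram matrix is engineered so that $L^\vee/L$ is the extension of $\Z/n$ by $\Z/n$ with invariant $2r$ (your relations $nf_1=0$, $nf_2=2rf_1$ and the values $(f_1,f_1)=0$, $(f_1,f_2)=1/n$, $(f_2,f_2)=2r/n^2$ all check out), and the group-theoretic half then follows from Theorem \ref{thm.fus.grp}. Where you genuinely diverge is the conformal-weight half. The proof in \cite{EMS15} determines $\rho(V(g^i)) \equiv i^2 r/n^2 \pmod{1/n}$ for every $i$ via modular invariance of twisted trace functions and the Verlinde formula, and reads off all $n^2$ weights from that; your two-generator argument replaces this by the observation that $\D$ is a quadratic form on the fusion group (because the twist on the invertible objects of the pointed premodular category $\CC_E(\om_V)$ of Sections \ref{section.ab.grp.coho} and \ref{sec.mod.cat} is one, via $\theta_M = e^{2\pi i \D(M)}$), so that $\D(f_1)=0$, $\D(f_2)=r/n^2$ and $b(f_1,f_2)=1/n$ determine everything. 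This is a real economy, since it avoids computing the weights of $V(g^i)$ for $i\neq 1$. The obstacle you flag is the right one but is not deep: the twisted-module axiom, in the form $Y^{V(g)}(a,z)m \in z^{k/n}\cdot V(g)((z))$ for $a \in V^k$, shows that $V^k$ shifts the $L_0$-grading of $V(g)$ by $k/n \bmod 1$, which pins down $b(f_1,f_2)=1/n$ once $f_1$ is sent to the eigenspace effecting the $+1/n$ shift. The one point you should add is the compatibility of that choice of generator of the kernel with the extension invariant $2r$ borrowed from Theorem \ref{thm.fus.grp}; in fact your quadratic form does this for you, since $b(nf_2,f_2)=n\cdot 2r/n^2=2r/n$ while $b(df_1,f_2)=d/n$, forcing $d\equiv 2r \pmod n$ --- so your argument even recovers the group structure rather than merely quoting it.
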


\section{The Orbifold Construction and the Monster}

The lattice vertex algebra $V_L$ associated with a self dual even lattice $L$ is holomorphic.

In fact the only holomorphic tame vertex algebras of central charge $8$ and $16$ are the lattice vertex algebras associated with $E_8$, $E_8 \oplus E_8$ and $D^+_{16}$. The Niemeier lattices give rise to $24$ holomorphic vertex algebras of central charge $24$, but there are others that do not arise this way. The most famous of these is the moonshine module $V^\natural$, a sketch of whose construction by Frenkel-Lepowsky-Meurman \cite{FLM} we now describe.

Among the Niemeier lattices the most remarkable is the Leech lattice $\La$, distinguished by its lack of vectors of norm $2$. The symmetry group of $\La$ is isomorphic to $2.\text{Co}_1$ where the Conway group $\text{Co}_1$ is one of the sporadic finite simple groups.

Now consider the involution of $\La$ which sends any vector $\al$ to its negative $-\al$, and denote by $\sigma$ a standard lift of this involution to the lattice vertex algebra $V_\La$. It turns out that $\sigma$ has type $2\{0\}$. According to the general picture described in Section \ref{fps.fusion}, the invariant subalgebra $V_\La^\sigma$ has four irreducible modules, whose fusion group structure and conformal weights are as shown in the following diagram.
\begin{center}
\begin{tikzpicture}

\tikzset{
quangleprod/.style={
    matrix of nodes,
    column sep=-\pgflinewidth, row sep=-\pgflinewidth,
    nodes={draw,
      minimum height=#1,
      anchor=center,
      text width=#1,
      align=center, 
      inner sep=0pt 
    },
  },
  quangleprod/.default=1.5cm
}

\matrix[quangleprod]
{
$V_\La^\sigma$ & $V(\sigma)^+$ \\
$V_\La^-$ & $V(\sigma)^-$ \\
};

\matrix[quangleprod] at (6, 0)
{
$0$ & $0$ \\
 $0$ & $1/2$ \\
};
\end{tikzpicture}
\end{center}
We observe that the discriminant form $D = \Z/2 \times \Z/2$ with quadratic form $\D$, contains two totally isotropic subgroups isomorphic to $\Z/2$.

As a vector space the moonshine module is the direct sum
\[
V^\natural = V_\La^\sigma \oplus V(\sigma)^+.
\]
By (\ref{ten.fus}) and (\ref{fus.int}) there exists a unique up to scalar intertwining operator
\[
\Y : V(\sigma)^+ \otimes V(\sigma)^+ \rightarrow V_\La^\sigma((z)).
\]
The vertex operation on $V_\La^\sigma$, the $V_\La^\sigma$-module structure of $V(\sigma)^+$, and a judicious choice of the intertwining operator $\Y$, together define a vertex algebra structure on $V^\natural$. See the next section.

Associated with the other isotropic subgroup we have the direct sum $V_\La^\sigma \oplus V_\La^-$. This sum obviously carries a vertex algebra structure as well, since it is just $V_\La$.





\section{Abelian Group Cohomology}\label{section.ab.grp.coho}

Suppose we are given a tame vertex algebra $W$ with grouplike fusion, i.e., a parametrisation
\[
\al \mapsto W^\al
\]
of the set of irreducible $V$-modules by a finite abelian group $E$, such that for all $\al, \beta \in E$,
\[
W^\al \boxtimes W^\beta \cong W^{\al+\beta}.
\]
In particular we may choose, for each $\al, \beta \in E$, a nonzero intertwining operator
\[
\Y^{\al,\beta}(-,z) : W^\al \otimes W^\beta \rightarrow z^{\varepsilon} \cdot W^{\al+\beta}((z))
\]
where
\[
\varepsilon = \D(W^\al) + \D(W^\beta) - \D(W^{\al+\beta}).
\]

The two compositions
\begin{align*}
\Y^{\al+\beta,\ga}(\Y^{\al, \beta}(a, z-w)b, w) c \quad \text{and} \quad \Y^{\al, \beta+\ga}(a, z) \Y^{\beta,\ga}(b, w) c
\end{align*}
are nonzero and proportional {\cite[Lemma 4.1]{Huang00GenRat}} {\cite[Theorem 3.9]{Huang05DEIO}}. The constant of proportionality, which depends only on $\al, \beta$ and $\gamma$, thus defines a map
\[
F : E \times E \times E \rightarrow \C^\times.
\]
A similar comparison of $\Y^{\al, \beta}$ with $\Y^{\beta,\al}$ yields a map
\[
\Om : E \times E \rightarrow \C^\times.
\]

Now let us consider a certain category $\CC_E$ constructed from the abelian group $E$. The set of objects of $\CC_E$ is $E$ itself, and the set $\Hom_{\CC_E}(\al, \beta)$ is $\C^\times$ if $\al = \beta$ and empty otherwise. It turns out {\cite{H08rigidity}} {\cite[Section 3.2]{Sven.thesis}} that the datum $(F, \Om)$ satisfies certain coherence relations which permit it to be used to endow $\CC_E$ with the structure of braided tensor category. We briefly recall this notion.


A braided tensor category is a category $\CC$ equipped with an operation $\otimes : \CC \times \CC \rightarrow \CC$ and natural isomorphisms
\[
a_{A, B, C} : (A \otimes B) \otimes C \rightarrow A \otimes (B \otimes C) \quad \text{and} \quad c_{A, B} : A \otimes B \rightarrow B \otimes A
\]
satisfying coherence conditions
\begin{align*}
\xymatrix{
 & ((A \otimes B) \otimes C) \otimes D \ar@{->}[dl]^{a \otimes 1} \ar@{->}[dr]^{a} & \\
(A \otimes (B \otimes C)) \otimes D \ar@{->}[d]^{a} \ar@{->}[d]^{c} & & (A \otimes B) \otimes (C \otimes D) \ar@{->}[d]^{a} \\
A \otimes ((B \otimes C) \otimes D) \ar@{->}[rr]^{1 \otimes a} & & A \otimes (B \otimes (C \otimes D)) \\
}
\end{align*}
and
\begin{align*}
\xymatrix{
 & (A \otimes B) \otimes C \ar@{->}[dl]^{a} \ar@{->}[d]^{c \otimes 1} & A \otimes (B \otimes C) \ar@{->}[d]^{a^{-1}} \ar@{->}[dr]^{1 \otimes c} &  \\
A \otimes (B \otimes C) \ar@{->}[d]^{c} \ar@{->}[d]^{c} & (B \otimes A) \otimes C \ar@{->}[d]^{a} & (A \otimes B) \otimes C \ar@{->}[d]^{c} \ar@{->}[d]^{c} & A \otimes (C \otimes B) \ar@{->}[d]^{a^{-1}} \\
(B \otimes C) \otimes A \ar@{->}[dr]^{a} & B \otimes (A \otimes C) \ar@{->}[d]^{1 \otimes c} & C \otimes (A \otimes B) \ar@{->}[d]^{a^{-1}} & (A \otimes C) \otimes B \ar@{->}[dl]^{c \otimes 1} \\
 & B \otimes (C \otimes A) & (C \otimes A) \otimes B. &  \\
}
\end{align*}

We obtain a braided tensor category structure on $\CC_E$ by putting $\al \otimes \beta = \al+\beta$, $a_{\al,\beta,\ga} = F(\al,\beta,\ga)$ and $c_{\al,\beta} = \Om(\al,\beta)$. In concrete terms, the pair $(F, \Om)$ satisfies the coherence conditions that derive naturally from the three commutative diagrams above. In \cite{ML52} MacLane introduced a cohomology theory for abelian groups (distinct from the usual group cohomology, which is defined for all groups) in which a $3$-cocycle is exactly a pair $(F, \Om)$ satisfying these coherence relations. In this theory a $2$-cochain is an arbitrary map
\[
\varphi : E \times E \rightarrow \C^\times,
\]
and the corresponding $3$-coboundary $d\varphi$ is the pair $(F, \Om)$ defined by
\[
F(\al,\beta,\ga) = \frac{\varphi(\beta,\ga)\varphi(\al, \beta+\ga)}{\varphi(\al+\beta,\ga)\varphi(\al,\beta)} \quad \text{and} \quad \Om(\al,\beta) = \frac{\varphi(\al,\beta)}{\varphi(\beta,\al)}.
\]
Modification of a $3$-cocycle by a coboundary yields an equivalent braided tensor category structure on $\CC_E$. Thus we might denote by $\CC_E(\om)$ the braided tensor category associated with $\om \in H^3_{\text{ab}}(E, \C^\times)$.

In the context of intertwining operators with which we began, modification of $(F, \Om)$ by a coboundary corresponds to a redefinition of the $\Y^{\al,\beta}$ by nonzero scalars. It follows that a tame vertex algebra $V$ with grouplike fusion has a canonically associated cohomology class
\[
\om_V \in H^3_{\text{ab}}(E, \C^\times).
\]
In general a collection of $V$-modules and intertwining operators that commute up to scalars $F$ and $\Om$ as described above is known as an abelian intertwining algebra \cite{DL93}.

Now let us suppose that for some subgroup $I \subset E$ the restriction of the class $\om_V$ vanishes in the cohomology group $H^3_{\text{ab}}(I, \C^\times)$. This means that the intertwining operators $\Y^{\al,\beta}$ can be chosen so that the sum $\bigoplus_{\al \in I} W^\al$ acquires the structure of a vertex algebra with vertex operation defined so that its restriction to $W^{\al} \otimes W^\beta$ coincides with $\Y^{\al,\beta}$ \cite{HohnGenera} \cite{carnahan.2014.building}.

The trace $\tr(\om)$ of an element $\om = [(F, \Om)] \in H^3_{\text{ab}}(E, \C^\times)$ is by definition the quadratic form $q$ on $E$ given by
\[
\Om(\al, \al) = e^{2\pi i q(\al)}.
\]
A theorem of Eilenberg-MacLane \cite{EILML54} asserts that the assignment $\om \mapsto \tr(\om)$ defines a bijection from $H^3_{\text{ab}}(E, \C^\times)$ to the set of quadratic forms on $E$ with values in $\C^\times$.

Now we come to the main point of this discussion. We have two quadratic forms on $E$, namely the conformal weight $\D$ and the trace $\tr(\om_V)$. It is straightforward to prove that the bilinear forms associated with these two quadratic forms effectively coincide (more precisely one is the negative of the other). Comparing the quadratic forms themselves is much more difficult, however we have the following result.
\begin{prop}[\cite{EMS15}]\label{trace.conf}
  Let $V$ be a tame vertex algebra with grouplike fusion, and associated discriminant form $(E, q)$. If $V$ is positive then
  \[
    \tr(\om_V) = -\D.
  \]
\end{prop}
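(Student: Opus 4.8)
The plan is to establish separately the two facts hinted at just before the proposition: that the bilinear forms attached to $\tr(\om_V)$ and to $-\D$ coincide (the easy part), and that the two quadratic forms themselves then agree (the hard part, where positivity enters). The first reduces the proposition to the vanishing of a certain sign character, and the second kills that character.

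For the bilinear forms I would consider the double braiding $\Om(\al,\beta)\Om(\beta,\al)$. This is the monodromy produced by transporting the variable of $\Y^{\al,\beta}$ once around that of $\Y^{\beta,\al}$, i.e. by the substitution $z \mapsto e^{2\pi i}z$ applied to the fractional power $z^{\eps}$ with $\eps = \D(W^\al)+\D(W^\beta)-\D(W^{\al+\beta})$. A full rotation sees $\eps$ only modulo $1$, so the monodromy is unambiguous and equals $e^{2\pi i(\D(\al)+\D(\beta)-\D(\al+\beta))}$. On the other hand, in the Eilenberg--MacLane description the same product equals $e^{2\pi i b(\al,\beta)}$, where $b$ is the bilinear form of the quadratic form $\tr(\om_V)$. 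Hence $b = -b_{\D}$, so $\tr(\om_V)$ and $-\D$ are two quadratic refinements of a single bilinear form. Their difference $\chi := \tr(\om_V)+\D$ is therefore a homomorphism $E \to \tfrac12\Z/\Z$, and the proposition is equivalent to the assertion $\chi = 0$.

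To compute the self-braiding $\Om(\al,\al) = e^{2\pi i\,\tr(\om_V)(\al)}$ I would use skew-symmetry of intertwining operators. Since $\Y^{\al,\al}$ and its opposite both span the one-dimensional space $\binom{W^{2\al}}{W^\al\ W^\al}$, the scalar $\Om(\al,\al)$ is obtained by applying Frenkel--Huang--Lepowsky skew-symmetry to $\Y^{\al,\al}$ itself and comparing leading terms: for a lowest-weight vector $u \in W^\al$ one reads off $\Om(\al,\al) = e^{\pm\pi i\,\ell_\al}$ from the identity $e^{zL_{-1}}\Y^{\al,\al}(u,e^{\pi i}z)u = \Om(\al,\al)\,\Y^{\al,\al}(u,z)u$, where $\ell_\al$ is the genuine real leading exponent of $\Y^{\al,\al}(u,z)u$ (the operator $e^{zL_{-1}}$ acts trivially on the lowest-weight output, so it does not affect the leading $z$-power). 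In contrast with the double braiding, this is a \emph{half}-monodromy: the phase $e^{\pm\pi i\ell_\al}$ depends on $\ell_\al$ modulo $2$, whereas the balancing relation pins $\ell_\al$ down only modulo $1$. The character $\chi$ measures exactly this residual $\tfrac12\Z/\Z$ ambiguity in passing from the square (the monodromy) to its square root (the half-monodromy).

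The crux, and the point at which positivity is indispensable, is to fix $\ell_\al$ modulo $2$. Writing $\ell_\al = W_{\mathrm{out}} - 2\rho(W^\al)$, where $W_{\mathrm{out}}$ is the conformal weight of the leading output vector of $\Y^{\al,\al}(u,z)u$, one finds that $\chi(\al) = 0$ is equivalent to the parity statement $W_{\mathrm{out}} \equiv 4\rho(W^\al) \pmod 2$. Positivity of $V$ is what controls $W_{\mathrm{out}}$: it forces the leading output to be the genuine lowest-weight (primary) vector of the sector into which $\Y^{\al,\al}$ maps, with no contribution from a spuriously low Virasoro level, thereby fixing the parity and selecting the geometric branch of the half-monodromy. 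I expect this final step --- controlling the exact leading exponent rather than merely its class modulo $1$ --- to be the main obstacle; once it is in place one has $\Om(\al,\al) = e^{-2\pi i\D(\al)}$ for every $\al$, hence $\chi = 0$ and $\tr(\om_V) = -\D$. Everything upstream is formal, given the braided structure on $\CC_E$ and the skew-symmetry of intertwining operators.
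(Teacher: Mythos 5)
Your reduction is sound as far as it goes: the monodromy computation showing that the bilinear forms of $\tr(\om_V)$ and $-\D$ agree, the observation that the discrepancy $\chi=\tr(\om_V)+\D$ is therefore an order-two character of $E$, and the skew-symmetry formula $\Om(\al,\al)=e^{\pm\pi i\ell_\al}$ with $\ell_\al$ the exact leading exponent all correctly isolate the difficulty as fixing $\ell_\al$ modulo $2$ rather than modulo $1$. The gap is in the step that is supposed to do this. The claim that positivity forces the leading coefficient of $\Y^{\al,\al}(u,z)u$ to lie in the lowest graded piece of $W^{2\al}$ is false. Take $V=V_{A_1}$ (which is positive, its unique nontrivial module having $\rho=1/4$), $\al$ the nontrivial element of $E=\Z/2$, and $u=\ket{\beta/2}$ with $\beta^2=2$: then $\Y^{\al,\al}(u,z)u=z^{1/2}\ket{\beta}+\cdots$, so the leading output has weight $1$ and $\ell_\al=1/2$, whereas the lowest weight of $W^{2\al}=V_L$ is $0$. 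The two candidate exponents $1/2$ and $0-2\cdot\tfrac14=-1/2$ give opposite values of $e^{\pm\pi i\ell}$, and it is the \emph{true} exponent, not the hypothetical primary one, that yields the correct $\Om(\al,\al)=e^{-2\pi i/4}$; your mechanism would output the wrong sign here. More fundamentally, positivity is a condition on the numbers $\rho(M)$ and says nothing about which graded component of $W^{2\al}$ an intertwining operator's leading term lands in, so it cannot select the branch in the way you want.

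The paper resolves the residual $\tfrac12\Z/\Z$ ambiguity by entirely different means (Section \ref{sec.mod.cat}): Huang's theorem makes the category of $V$-modules a modular category with $\theta_M=e^{2\pi i\D(M)}$ and with $c_{M,M}$ acting by the scalar $\Om(M,M)e^{2\pi i\D_M}$; the categorical identity $d(X)\cdot\theta_X=\tr(c_{X,X})$ then determines $\Om(\al,\al)$ once one knows $d(W^\al)=+1$, and this is exactly where positivity enters, via the analytic formula $\qdim(M)=\lim_{t\to 0^+}\chi_M(it)/\chi_V(it)$, which forces the quantum dimension of a simple current to be $+1$ rather than $-1$. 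In your language the character $\chi$ to be killed is $\al\mapsto d(W^\al)$, and it is killed by modular invariance of characters, not by an analysis of leading exponents. The paper's own remark at the end of Section \ref{sec.schur.indic} --- that elementary intertwining-operator arguments suffice only for odd or twice-odd order, and that the modular category machinery ``seems to be necessary'' in general --- is a warning that no purely formal completion of your final step is expected to exist.
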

In particular if $I \subset E$ is a totally isotropic subgroup with respect to $\D$, then the direct sum of $V$-modules indexed by $I$ carries a unique vertex algebra structure extending that of $V$. Combining this result with the computation of the fusion algebras of fixed point vertex algebras in Section \ref{fps.fusion} yields the following theorem.
\begin{thm}\label{orb.theorem}
  Let $V$ be a tame holomorphic vertex algebra, and let $g$ be an automorphism of $V$ of type $n\{0\}$. Then the fusion group of $V^g$ takes the form $I \times I^\vee$ where $I$ and $I^\vee$ are totally isotropic subgroups, each isomorphic to $\Z/n$. If $V^g$ is positive, then the direct sums
  \begin{align*}
\bigoplus_{\al \in I}W^\al \quad \text{and} \quad \bigoplus_{\al \in I^\vee}W^\al
  \end{align*}
  are both holomorphic tame vertex algebras. One of them is just $V$, and the other we denote $V^{\text{orb}(g)}$.
\end{thm}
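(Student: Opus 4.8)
The plan is to read off the fusion data from Theorem \ref{thm.fus.grp} together with its conformal-weight refinement, to construct the two candidate vertex algebras using positivity and Proposition \ref{trace.conf}, and finally to invoke the representation theory of simple-current extensions to see that they are holomorphic.

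First I would determine the fusion group. Since $g$ has type $n\{0\}$ we have $r=0$, so the extension invariant of Theorem \ref{thm.fus.grp} is $d=2r=0$ and the group $E$ splits as $\Z/n \times \Z/n$. By the conformal-weight refinement the fusion group of $V^g$ is the discriminant form $(D,q)$ of the rank-two lattice $L = \Z \oplus \Z$ with Gram matrix $\bigtwobytwo{0}{n}{n}{0}$, and $\D(W^\al) = q(\al)$. Computing with the dual basis one finds $D \cong \Z/n \times \Z/n$ with $q((u,v)) = uv/n \pmod 1$, the hyperbolic form. Its two coordinate subgroups $I = \Z/n \times 0$ and $I^\vee = 0 \times \Z/n$ are then totally isotropic, each isomorphic to $\Z/n$, and $D = I \times I^\vee$. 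This is the first assertion, and it requires no positivity hypothesis.

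Next, assuming $V^g$ positive, I would produce the vertex-algebra structures. Proposition \ref{trace.conf} gives $\tr(\om_{V^g}) = -\D$ as quadratic forms on $D$. Restricting to the totally isotropic subgroup $I$, on which $\D$ vanishes, yields $\tr(\om_{V^g})|_I = 0$; since by Eilenberg-MacLane the trace is a complete invariant of a class in $H^3_{\text{ab}}(I, \C^\times)$, the restricted class $\om_{V^g}|_I$ is trivial. By the extension principle of the present section the sum $\bigoplus_{\al \in I} W^\al$ then carries a vertex-algebra structure extending that of $V^g$, and identically for $I^\vee$ (this is exactly the ``in particular'' consequence recorded just after Proposition \ref{trace.conf}).

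The main obstacle is the holomorphicity, and the tameness, of these extensions, which I would settle through the theory of local modules. Viewing $A_I = \bigoplus_{\al \in I} W^\al$ as a connected commutative (\'etale) algebra in the modular tensor category $\CC_D(\om_{V^g})$ of $V^g$-modules, the standard theory identifies the isomorphism classes of irreducible $A_I$-modules with the cosets in $I^\perp/I$, where $\perp$ is taken relative to the nondegenerate bilinear form of $(D,q)$. Here total isotropy gives $I \subseteq I^\perp$, while $|I|^2 = n^2 = |D|$ forces $|I^\perp| = |I|$, so $I = I^\perp$; hence $I^\perp/I$ is trivial, $A_I$ has a unique irreducible module, and $A_I$ is holomorphic, with tameness inherited through the same extension machinery. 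The identical argument applies to $I^\vee$. Finally, to identify one extension with $V$, observe that the injected copy $\Z/n \hookrightarrow E$ of (\ref{Zn.extension}) parametrises precisely the $V^g$-modules sitting inside the untwisted module $V$; these have integral conformal weight, which is why that copy is a totally isotropic subgroup and may be taken as $I$. Since $V$ decomposes as a $V^g$-module into its $n$ $g$-eigenspaces, each an irreducible $V^g$-module, uniqueness of the extension structure gives $\bigoplus_{\al \in I} W^\al = V$, and the remaining extension $\bigoplus_{\al \in I^\vee} W^\al$ is by definition $V^{\text{orb}(g)}$.
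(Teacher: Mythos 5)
Your proposal is correct and follows essentially the same route as the paper: the fusion group $I \times I^\vee$ with its hyperbolic quadratic form is read off from the theorems of Section \ref{fps.fusion} with $r=0$, and the vertex algebra structures on the two isotropic sums come from Proposition \ref{trace.conf} together with the Eilenberg--MacLane trace criterion, exactly as in the paragraph preceding the theorem. The one step the survey leaves implicit is holomorphicity, and your argument identifying the irreducible local modules of the extension with $I^\perp/I$ and using maximality of $I$ is precisely the mechanism employed in the underlying reference \cite{EMS15}.
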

The vertex algebra $V^{\text{orb}(g)}$ is known as the orbifold of $V$ by $g$. We may think of the orbifold construction as an analogue of Kneser's neighbour construction for lattices.

By its construction the orbifold $V^{\text{orb}(g)}$ is graded by $I^\vee \cong \Z/n$, and can thereby be equipped with an automorphism of order $n$, canonical up to Galois symmetry. Denoting some choice of such an automorphism by $g^\vee$, we then have $(V^{\text{orb}(g)})^{\text{orb}(g^\vee)} \cong V$. In \cite{EMS15} this is called the inverse orbifold construction.

\section{New Holomorphic Vertex Algebras from Old}

Let us return to the question of constructing holomorphic vertex algebras of central charge $24$. The lattice vertex algebra construction produces $24$ examples, associated with the Niemeier lattices. Based on the remarkable work of Schellekens \cite{S93}, and the accumulation of subsequent evidence, the following conjecture has been made. See also \cite{Hoehn17}.
\begin{conj}\label{71.conj}
There are exactly $71$ holomorphic tame vertex algebras of central charge $24$.
\end{conj}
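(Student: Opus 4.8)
The plan is to break the statement into three parts: an \emph{upper bound} showing that any holomorphic tame vertex algebra $V$ of central charge $24$ falls into at most $71$ families indexed by the isomorphism type of its weight-one space $V_1$; an \emph{existence} statement realising each family; and a \emph{uniqueness} statement asserting that $V$ is determined up to isomorphism by $V_1$.

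For the upper bound I would begin with Zhu's theorem from Section \ref{Section.Zhu}. Since $V$ is holomorphic the space $\CF(V)$ is one-dimensional, so $\chi_V$ is $SL_2(\Z)$-invariant up to a character, and the central charge $c=24$ forces
\[
\chi_V(q) = j(q) - 744 + \dim(V_1),
\]
so that the graded dimension, and in particular $\dim V_1$, is the only modular datum available. The weight-one space $V_1$ is a reductive Lie algebra under the bracket $a_{(0)}b$, and the Virasoro vector $\om$ together with the affine subalgebra generated by $V_1$ constrains the possible levels, ranks and dimensions. Following Schellekens, modular invariance of the characters of this affine sub-vertex-algebra, combined with the requirement that $\chi_V$ have the form above, yields a finite Diophantine system whose solutions are exactly the $71$ admissible pairs (Lie algebra, level vector). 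This is substantial but essentially finite, and I would treat it as the combinatorial input.

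For existence I would invoke the orbifold machinery of the present paper. Starting from the $24$ lattice vertex algebras attached to the Niemeier lattices, each holomorphic of central charge $24$, I would produce the remaining cases as iterated cyclic orbifolds. Concretely, given an already-realised case $V$ and a suitable automorphism $g$ of type $n\{0\}$, Theorem \ref{orb.theorem} (which rests on Proposition \ref{trace.conf} and the grouplike fusion of Theorem \ref{thm.fus.grp}) produces a new holomorphic tame vertex algebra $V^{\text{orb}(g)}$, provided $V^g$ is positive. The task is to exhibit, for each of the $71$ target Lie algebras, an automorphism $g$ of an already-constructed holomorphic vertex algebra whose orbifold has the prescribed $V_1$; the conformal weights recorded by $\D = q$ guarantee that the relevant totally isotropic subgroup exists, and hence that the extension is a genuine vertex algebra. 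Since the orbifold is invertible, these realisations knit together into a single connected web under the neighbour-type operation, mirroring Kneser's theorem for lattices.

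The hard part is uniqueness: one must show that two holomorphic tame vertex algebras of central charge $24$ with isomorphic weight-one Lie algebras are themselves isomorphic. The essential difficulty is that the $j$-function character pins down only the graded dimension, not the vertex operation, so one has no direct modular handle. My approach would be to realise every such $V$ as an orbifold (or inverse orbifold) of a fixed reference algebra, for instance a Niemeier lattice algebra or $V_\La$, thereby reducing the comparison of two vertex algebras to a comparison of the corresponding automorphisms up to conjugacy in $\aut(V_\La)$ and its analogues. This in turn requires that the inverse orbifold construction recover the same reference algebra, and that the fusion-categorical datum $\om_V \in H^3_{\text{ab}}(E, \C^\times)$ together with $V_1$ rigidify the extension. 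I expect this step to demand delicate case-by-case control of the automorphism and its standard lift, and it is here that the bulk of the work, and the genuine mathematical obstruction, lies.
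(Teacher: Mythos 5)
The statement you are trying to prove is labelled a \emph{conjecture} in the paper, and the paper offers no proof of it; it only surveys the evidence. Your three-part decomposition (upper bound on the possibilities for $V_1$, existence, uniqueness) is exactly the strategy the paper describes, and the first two parts are indeed available: Schellekens' theorem pins $V_1$ down to one of $71$ reductive Lie algebras, and the paper records that all $71$ have been realised, largely via the orbifold machinery of Theorem \ref{orb.theorem}. So as a summary of the state of the art your outline is accurate. But it is not a proof, and the place where it fails is precisely the step you defer to ``delicate case-by-case control.''

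The genuine gap is the uniqueness step, and it is not merely technical. Your plan is to realise an \emph{arbitrary} holomorphic tame $V$ of central charge $24$ as an orbifold (or inverse orbifold) of a fixed reference algebra and then compare automorphisms up to conjugacy. There is no known argument that an abstract such $V$ admits any realisation of this kind: the orbifold construction requires an automorphism of a known algebra as input, whereas here one starts from $V$ alone. When $V_1 \neq 0$ one can at least use the affine vertex subalgebra generated by $V_1$ as a foothold, which is how the reverse-orbifold uniqueness proofs cited in the paper proceed; but when $V_1 = 0$ (the moonshine module case) there is no foothold at all, and the uniqueness of $V^\natural$ is itself a famous open problem. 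The paper itself states that only $63$ of the $71$ uniqueness cases had been settled at the time of writing. Your proposal therefore cannot be completed by the methods you describe, and you should present the statement as a conjecture supported by partial results rather than as a theorem with a proof.
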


Let $L$ be a Niemeier lattice and let $g$ be the standard lift to $V_L$ of an automorphism of $L$ of order $n$. Since the structure of $V_L$ is well understood, the type of $g$ can be determined easily. Indeed the conformal weight of the twisted module $V(g)$ is given by the formula
\begin{align}\label{lattice.conformal.weight}
\rho = \frac{1}{4n^2} \sum_{j=1}^{n-1} j(n-j)\dim(\h[\zeta^j]),
\end{align}
where $\zeta$ is a primitive $n^{\text{th}}$ root of unity, and $\h[\mu]$ denotes the eigenspace in $\h = L \otimes_\Z \C$ with eigenvalue $\mu$.

By running through all Niemeier lattices, and all their automorphisms of type $0$, we produce a huge set of holomorphic vertex algebras as orbifolds. But how do we know if the algebras produced are distinct or not? For this we need an invariant.

It is well known that the component of degree one of a tame vertex algebra $V$ carries the natural structure of a reductive Lie algebra. If $V$ is holomorphic of central charge $24$ then modular invariance of the character of $V$ together with some Lie algebra invariant theory can be used to constrain $V_1$. The following theorem of Schellekens is the result upon which Conjecture \ref{71.conj} above is based.
\begin{thm}[\cite{S93}, \cite{EMS15}]
Let $V$ be a holomorphic tame vertex algebra of central charge $24$. Then the Lie algebra $V_1$ is isomorphic to one among $71$ reductive Lie algebras, listed in Table 1 of \cite{S93}. 
\end{thm}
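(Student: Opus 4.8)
The plan is to follow Schellekens' original strategy, reducing the classification of the possible Lie algebras $V_1$ to a finite Diophantine search governed by modular invariance and the affine Kac--Moody structure carried by $V$. First I would record the consequence of Zhu's theorem from Section~\ref{Section.Zhu}: since $\chi_V$ is a modular function with a single simple pole at the cusp, it must equal $j - 744 + \dim V_1$, so that every graded dimension $\dim V_n$ with $n \ge 2$ is fixed; in particular $\dim V_2 = 196884$, independently of $V$. Writing $\g = V_1$ for the reductive Lie algebra structure noted above, I would decompose $\g = \g_1 \oplus \cdots \oplus \g_s \oplus \mathfrak{a}$ into simple ideals $\g_i$ and an abelian part $\mathfrak{a}$. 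The fields $Y(a,z)$ with $a \in \g_i$ generate an affine Kac--Moody vertex subalgebra at some level $k_i$, and the crucial input (proved by Dong--Mason for rational $C_2$-cofinite $V$) is that each $k_i$ is a positive integer and that $V$ is an integrable $\widehat{\g}$-module.

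Next I would bring in the Sugawara construction. The total affine subalgebra $\widehat{\g}_{\mathrm{tot}} = \bigoplus_i \widehat{\g}_i$ produces a conformal vector $\om_{\mathrm{Sug}}$ of central charge $c_{\mathrm{aff}} = \sum_i \frac{k_i \dim\g_i}{k_i + h_i^\vee} + \dim\mathfrak{a}$, where $h_i^\vee$ denotes the dual Coxeter number; the difference $\om - \om_{\mathrm{Sug}}$ generates a commutant vertex algebra of central charge $24 - c_{\mathrm{aff}} \ge 0$. Matching the fixed value $\dim V_2 = 196884$ against the grade-two dimension of the vacuum $\widehat{\g}$-module, counting the states $a_{(-2)}\mathbf{1}$ and $a_{(-1)}b_{(-1)}\mathbf{1}$ modulo null vectors together with any new primaries, yields Schellekens' uniform-ratio condition
\[
\frac{h_i^\vee}{k_i} = \frac{\dim V_1 - 24}{24} \qquad \text{for every } i.
\]
A short manipulation then forces the trichotomy: $V_1 = 0$, or $V_1$ abelian of dimension $24$, or $V_1$ semisimple with $c_{\mathrm{aff}} = 24$ and trivial commutant.

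The remaining, and most delicate, constraints come from demanding that $V$, restricted to $\widehat{\g}_{\mathrm{tot}}$, decompose as a genuine direct sum of integrable highest weight modules with non-negative integer multiplicities. I would expand $\chi_V = j - 744 + \dim V_1$ in the basis of normalised Weyl--Kac characters and use their known behaviour under $SL_2(\Z)$ (Section~\ref{Section.Zhu}) to extract, weight by weight, the multiplicity of each admissible module; positivity and integrality of these multiplicities, combined with the uniform-ratio condition and integrality of the $k_i$, cut the admissible tuples $(\g_i, k_i)$ down to a finite list.

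The hard part will be this final reduction. The numerical conditions above admit many more formal solutions than $71$; ruling out the spurious ones requires the full strength of multiplicity positivity together with the precise modular data of the affine characters, and the surviving cases must then be checked one by one, the finite but substantial search that Schellekens carried out by computer and that \cite{EMS15} places on a rigorous footing. I would therefore expect the genuine obstacle to lie not in any single identity but in organising this Diophantine enumeration so that exactly the $71$ reductive Lie algebras of Table~1 of \cite{S93} survive, and in justifying rigorously that the representation-theoretic positivity constraints are both necessary and sufficient to exclude every other candidate.
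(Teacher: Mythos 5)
The paper itself offers no proof of this theorem: it is quoted from \cite{S93} and \cite{EMS15} with only the remark that modular invariance of $\chi_V$ together with Lie algebra invariant theory constrains $V_1$. Measured against the actual argument in those references (and in \cite{DMaff04}), your outline follows the correct global strategy, but the step that produces the key numerical constraint would fail as written. The uniform-ratio condition $h_i^\vee/k_i = (\dim V_1 - 24)/24$ is one equation \emph{per simple ideal} $\g_i$, and it cannot be extracted from the single scalar identity $\dim V_2 = 196884$: matching one graded dimension against vacuum-module descendants plus unknown non-negative primary multiplicities yields one Diophantine relation (essentially the sum over components), not $s$ of them. The mechanism that actually produces the componentwise condition is modular invariance of the weight-two trace functions $\tr_V o(u)o(v)\,q^{L_0-1}$ for $u,v \in V_1$, equivalently of $\tr_V o(u_{[-1]}v)\,q^{L_0-1}$ in Zhu's square-bracket formalism. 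Since the space of holomorphic modular forms of weight $2$ for $SL_2(\Z)$ is zero, these traces are forced to be explicit quasimodular expressions built from $E_2$, and comparing the first two Fourier coefficients determines the normalised invariant bilinear form on each $\g_i$ separately; this is what yields $h_i^\vee/k_i$ ideal by ideal. If you prefer to stay with character-theoretic bookkeeping you would need to match the full $\g$-isotypic decomposition of $V_2$, not merely its dimension.

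Two further points. First, the trichotomy ($V_1=0$, abelian of rank $24$, or semisimple with $c_{\mathrm{aff}}=24$ and trivial commutant) is not a short manipulation from the ratio condition alone: one must separately exclude a mixed abelian-plus-semisimple $V_1$ and show that the Heisenberg and Virasoro commutant cosets are trivial, which uses positivity of the invariant form and the CFT-type and self-contragredient hypotheses packaged into tameness, as well as the Dong--Mason integrability results guaranteeing that the $k_i$ are positive integers and that $V$ restricts to an integrable $\widehat{\g}$-module. Second, you are right to locate the genuine difficulty in the final enumeration; that residual finite search under positivity and integrality of the multiplicities of integrable highest weight modules is precisely the computer-assisted content of \cite{S93}, with \cite{EMS15} supplying the rigorous rationality and integrability inputs rather than a route around the search. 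With the weight-two trace-function argument substituted for the dimension count, your sketch becomes an accurate summary of the known proof.
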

Calculations with Hauptmoduln are used to prove the following theorem which relates the dimensions of the Lie algebras associated with $V$, $V^g$, and $V^{\text{orb}(g)}$.\newpage
\begin{thm}[\cite{EMS17}]\label{dimension.formula}
Let $n$ be a positive integer such that the congruence subgroup $\G_0(n) \subset SL_2(\Z)$ has genus zero. Let $V$ be a holomorphic tame vertex algebra of central charge $24$ and $g$ an automorphism of $V$ of type $n\{0\}$ such that the conformal weight $\rho$ of the twisted module $V(g^i)$ satisfies $\rho(V(g^i)) \geq 1$ for $i=1,2,\ldots,n-1$. Then
\[
\dim(V_1^{\text{orb}(g)}) = 24 + \sum_{d|n} c_d \dim(V_1^{\sigma^d}).
\]
Here the coefficients $c_d$ are given by
\[
c_d = \frac{\la(d)}{d} \cdot \frac{\phi((d, n/d))}{(d, n/d)} \cdot \psi(n/d)
\]
where $\phi(n) = \sum_{d|n}d$ is Euler's totient function,
\begin{align*}
\la(n) = \prod_{\substack{\text{$p$ prime} \\ p|n}}(-p)
\quad \text{and} \quad
\psi(n) = n \prod_{\substack{\text{$p$ prime} \\ p|n}}(1 + 1/p).
\end{align*}

\end{thm}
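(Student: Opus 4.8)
The plan is to compute the constant term of the character $\chi_{V^{\text{orb}(g)}}$ by writing it as an average of twining characters and then reorganising that average along the divisors of $n$. For $a, b \in \Z/n$ set
\[
Z_{a,b}(\tau) = \tr_{V(g^a)}\!\bigl(g^b\, q^{L_0 - 1}\bigr), \qquad q = e^{2\pi i \tau},
\]
for the genus-one twining characters (here $c/24 = 1$). Because $g$ has type $n\{0\}$, the orbifold is the sum of the $g$-invariant parts of the sectors $V(g^a)$ (this is the content of Section~\ref{fps.fusion} and Theorem~\ref{orb.theorem}: the isotropic subgroup $I^\vee$ meets each sector in its invariant piece), so that $\chi_{V^{\text{orb}(g)}} = \tfrac1n\sum_{a,b} Z_{a,b}$. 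Applying Zhu's theorem directly to the holomorphic vertex algebra $V^{\text{orb}(g)}$ of central charge $24$ forces $\chi_{V^{\text{orb}(g)}} = j + N$ with $N = \dim(V_1^{\text{orb}(g)})$, and $N$ is the quantity to be determined.

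First I would split the sum along the divisors of $n$. Since $g$ is of type $\{0\}$ the family $\{Z_{a,b}\}$ is permuted by $SL_2(\Z)$ with trivial multiplier, and the orbit of a pair $(a,b)$ is governed by $e = \gcd(a,b,n)$. Hence for each $e \mid n$ the block $S_e = \sum_{\gcd(a,b,n)=e} Z_{a,b}$ is itself $SL_2(\Z)$-invariant. Its pole at $\infty$ comes only from the untwisted terms $Z_{0,b}$, each contributing $q^{-1}$ since $g^b$ fixes the vacuum, and there are exactly $\varphi(n/e)$ of these (Euler's totient); the hypothesis $\rho(V(g^a)) \geq 1$ guarantees that every twisted term with $a \neq 0$ is holomorphic at $\infty$. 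A weight-zero $SL_2(\Z)$-modular function holomorphic on $\HH$ with a simple pole of residue-coefficient $\varphi(n/e)$ is necessarily $S_e = \varphi(n/e)\, j + K_e$ for a constant $K_e$, and summing over $e$ (using $\sum_{e\mid n}\varphi(n/e) = n$) gives $N = \tfrac1n \sum_{e\mid n} K_e$. The problem is thus reduced to evaluating each $K_e$.

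This is where the genus-zero hypothesis does its work. The constant term of a twisted $Z_{a,b}$ at $\infty$ equals the constant term, at some other cusp of a congruence group containing $\G_0(n)$, of the single twining character $Z_{0,e} = \tr_V(g^e q^{L_0-1})$. Now $Z_{0,e}$ has a simple pole at $\infty$, and at each cusp lying over a twisted sector the bound $\rho \geq 1$ again forbids a pole; since $\G_0(n)$ has genus zero, so does every intermediate $\G_0(m)$, and $Z_{0,e}$ is therefore rigidly determined up to an additive constant by this polar data as a rational function of a single Hauptmodul. The constant terms of the Hauptmodul at all cusps of $X_0(n)$ being explicit, one reads off every twisted constant term, hence $K_e$, purely in terms of the untwisted traces $\tr(g^e \mid V_1)$ together with the cusp combinatorics.

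It remains to perform the Möbius-type inversion over the divisor lattice that converts the orbit sums of the traces $\tr(g^e \mid V_1)$ into the fixed-point dimensions $\dim(V_1^{g^d})$. The cusps of $\G_0(n)$ are indexed by divisors with widths the familiar arithmetic functions, which is precisely how the index factor $\psi(n/d) = [\,SL_2(\Z) : \G_0(n/d)\,]$, the totient-type factor $\phi((d,n/d))/(d,n/d)$, and the sign $\la(d) = \prod_{p\mid d}(-p)$ of the inversion enter; the universal constant $24$ is the residual contribution of the constant term of $j$ accumulated over the cusps. The main obstacle is exactly this last step: verifying that the cusp-by-cusp expansions of the twining characters on the genus-zero curve $X_0(n)$ assemble into the stated multiplicative coefficients $c_d$. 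Everything preceding it is formal, but without the genus-zero hypothesis the twining characters would not be pinned down by their polar parts, and the clean divisor-indexed formula would break down.
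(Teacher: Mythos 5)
Your proposal is correct in outline and follows essentially the same route as the proof in \cite{EMS17} to which the paper defers: the paper itself gives no argument beyond the remark that ``calculations with Hauptmoduln are used,'' and your strategy --- averaging the twining characters $Z_{a,b}$, splitting into $SL_2(\Z)$-orbits indexed by $\gcd(a,b,n)$, using positivity of the twisted conformal weights to control the polar parts, pinning down each $Z_{0,e}$ via the genus-zero Hauptmodul for $\G_0(n/(e,n))$, and finally inverting from the traces $\tr(g^e\mid V_1)$ to the fixed-point dimensions --- is precisely how that reference proceeds. The only substantive work you have deferred is the one you correctly identify: the cusp-by-cusp bookkeeping that produces the coefficients $c_d$.
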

It is evident that the Lie algebra $V^{\text{orb}(g)}_1$ contains $V^g_1$ as a Lie subalgebra. Since $V^{\text{orb}(g)}$ is constructed as a direct sum over $\Z/n$, it is in fact the case that $V^{\text{orb}(g)}_1$ contains $V^g_1$ as the fixed point Lie subalgebra under some automorphism of order $n$. In \cite{Kac69} {\cite[Chapter 8]{KacIDLA}} Kac gives a neat classification of finite order automorphisms of semisimple Lie algebras in terms of labeled Dynkin diagams in a way that allows the fixed point subalgebra to be read off from the corresponding diagram.

For some choices of $L$ and $g$ the facts gathered above conspire to uniquely fix the isomorphism type of the weight one Lie algebra of the orbifold $V_L^{\text{orb}(g)}$. This is illustrated by the following example. Let $L$ be the Niemeier lattice whose root system is of type $A_9^2 D_6$. Then $V_L$ admits an automorphism $g$ of type $4\{0\}$ with fixed point Lie subalgebra $(V_L^g)_1$ isomorphic to $A_2 B_2 D_5 \C$. On the other hand $\dim(V_L^{\text{orb}(g)})_1 = 96$, so by inspection of Schellekens' list the isomorphism type of $(V_L^{\text{orb}(g)})_1$ must be either
\[
A_2^{12}, \quad B_2^4 D_4^2, \quad A_2^2 A_5^2 B_2, \quad A_2^2 A_8, \quad \text{or} \quad A_2 B_2 E_6.
\]
But Kac's classification implies that, among these types, only $E_6$ admits an automorphism of order $4$ whose fixed point subalgebra contains $D_5$. Thus by process of elimination the weight one Lie algebra of $V_L^{\text{orb}(g)}$ is of type $A_2 B_2 E_6$. This sort of argument can be implemented on a computer.

We now summarise the current state of Conjecture \ref{71.conj}. All $71$ Lie algebras on Schellekens' list have been realised as weight one Lie algebras of holomorphic vertex algebras of central charge $24$, as follows.
\begin{itemize}
\item The lattice vertex algebras associated with the 24 Niemeier lattices \cite{B86} \cite{FLM}.

\item 15 examples constructed as $\Z/2$-orbifolds of Niemeier lattice vertex algebras by the $(-1)$-isometry \cite{DGM96} (and \cite{FLM} for the case of the moonshine module $V^\natural$).

\item 17 examples constructed as iterated $\Z/2$-orbifolds of lattice vertex algebras \cite{Lam11} \cite{LS12a} \cite{LS12b}. The combinatorial structure of a \emph{Virasoro frame} \cite{DGHframed} is used in these cases to keep track of the vertex algebra structure throughout the process of repeated orbifolds.

\item 1 example constructed as a $\Z/3$-orbifold of a lattice vertex algebra in \cite{M10Z3orbifold} and a further 2 in \cite{SSZ3orbifold} by the same method. Many of the results of Section \ref{fps.fusion} were first obtained in the order $3$ case in \cite{M10Z3orbifold}.

\item 6 examples constructed as $\Z/2$-orbifolds of previously constructed vertex algebras (two of them conditionally on the next item) \cite{LS3} \cite{LL}.

\item 5 examples constructed as $\Z/n$-orbifolds of Niemeier lattice vertex algebras by lattice automorphisms (of orders $n = 4, 5, 6, 6, 10$) \cite{EMS15}.

\item 1 example constructed as a $\Z/7$-orbifold of the Leech lattice vertex algebra $V_\La$ \cite{LS-A67}.
\end{itemize}
The uniqueness of the holomorphic vertex algebra with given weight one Lie algebra has been established for most cases. At the time of writing $63$ out of $71$ cases have been settled, as follows.
\begin{itemize}
\item For the 24 Niemeier lattice vertex algebras uniqueness is proved in \cite{dong.mason.eff}.

\item For 2 of the framed vertex algebras uniqueness was shown in \cite{LS12b}.

\item For 1 vertex algebra uniqueness is derived in \cite{LL} from its construction as a \emph{mirror extension} \cite{DJXmirror}.
  
\item For 36 vertex algebras constructed as cyclic orbifolds of lattice vertex algebras uniqueness is deduced from the uniqueness of the lattice vertex algebra and the existence of an \emph{inverse \textup{(}or reverse\textup{)} orbifold} \cite{LS.reverse} \cite{KLL} \cite{EMS17} \cite{LS.Leech.uniqueness}.
\end{itemize}

\section{Modular Categories}\label{sec.mod.cat}

In Section \ref{section.ab.grp.coho} we obtained a braided tensor category $\CC_E(\om_V)$ from the tame vertex algebra $V$ with grouplike fusion and fusion group $E$. This structure is really a shadow of a more elaborate structure present in the category of representations of any tame vertex algebra, namely the structure of a modular category \cite{MS89} \cite{fuchs2002tft} \cite{H08rigidity}.

A pre-modular category is a $\C$-linear semisimple braided tensor category $\CC$, an assignment of a dual object $A^*$ to every object $A$ together with morphisms
\[
\coev_A : \mathbf{1} \rightarrow A \otimes A^* \quad \text{and} \quad \ev_A : A^* \otimes A \rightarrow \mathbf{1},
\]
as well as a system of {``twist''} isomorphisms $\theta_A : A \rightarrow A$ satisfying
\[
\theta_{A \otimes B} = (\theta_A \otimes \theta_B) \circ c_{B,A} \circ c_{A,B} \quad \text{and} \quad \theta_{A^*} = \theta_A^*.
\]
There exists a remarkable diagrammatic calculus of braided ribbons whose use facilitates computations in a pre-modular category and which underlies applications of this structure to topology and knot theory \cite{TuraevBook} \cite{BKBook}.



The \emph{trace} of an endomorphism $f : A \rightarrow A$ in a pre-modular category is the scalar $\tr(f)$ defined by
\begin{align*}
\ev_{A} \circ c_{A, A^*} \circ (f \otimes \id_{A^*}) \circ \coev_A = \tr(f)\id_{\mathbf{1}},
\end{align*}
and the \emph{quantum dimension} of the object $A$ is by definition
\[
  d(A) = \tr(\id_A).
\]
Suppose the pre-modular category $\CC$ contains $n$ simple objects. Then the un-normalised $S$-matrix of $\CC$ is the $n \times n$ matrix defined by its entries
\[
s_{A,B} = \tr(c_{B,A} \circ c_{A,B}).
\]
If this matrix is invertible then the pre-modular category is said to be modular. The normalised $S$-matrix of $\CC$ is $S = s / D$ where the constant $D$ is $\sqrt{\sum_A d(A)}$. Here the sum extends over the set of simple objects of $\CC$.

The set of fusion rules $\{\mathcal{N}_{A, B}^C\}$ of a pre-modular category $\CC$ is defined by
\[
A \otimes B \cong \bigoplus_C \mathcal{N}_{A,B}^C \cdot C,
\]
where $A, B \in \CC$ are simple objects and the direct sum runs over the set of simple objects of $\CC$. If $\CC$ is modular then the Verlinde formula
\[
\mathcal{N}_{A, B}^C = \sum_{X} \frac{S_{AX} S_{BX} \ov{S_{CX}}}{S_{VX}}
\]
expresses the fusion rules in terms of the $S$-matrix of $\CC$. Here again the sum runs over the set of simple objects of $\CC$, and $\ov{x}$ denotes the complex conjugate of $x$.
\begin{prop}[Huang \cite{H08rigidity}]\label{Huang.modular.cat.prop}
  Let $V$ be a tame vertex algebra. Then the category of $V$-modules possesses the structure of a modular category in which
  \[
    \Hom(A \otimes B, C) \cong \binom{C}{A \,\,\, B},
  \]
the twists of simple objects are given by $\theta_M = e^{2\pi i \D(M)}$, and in which the $S$-matrix coincides with Zhu's $S$-matrix $\rho_V(S)$.
\end{prop}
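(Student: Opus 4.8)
The plan is to assemble the modular category structure on the category $\CC$ of $V$-modules in stages, following the tensor product theory of Huang and Lepowsky. First I would construct the tensor bifunctor: given $V$-modules $A$ and $B$, one defines $A \boxtimes B$ to be the $V$-module representing the functor $C \mapsto \binom{C}{A\,\,B}$, so that the universal intertwining operator furnishes the natural isomorphism $\Hom(A \boxtimes B, C) \cong \binom{C}{A\,\,B}$ asserted in the statement, and the categorical fusion rules agree with those of $V$ by (\ref{fus.int}). The existence of this representing object is exactly where rationality and $C_2$-cofiniteness of the tame vertex algebra $V$ enter: they guarantee that the relevant spaces of intertwining operators are finite dimensional and that the construction closes up within $\CC$.

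The central analytic input is the associativity and commutativity of intertwining operators already invoked in Section \ref{section.ab.grp.coho}. Concretely, products $\Y_1(a, z)\Y_2(b, w)c$ and iterates $\Y_3(\Y_4(a, z-w)b, w)c$ converge in suitable domains and are analytic continuations of one another, as in the schematic relation (\ref{assoc.rel}). From this I would extract the associativity isomorphism $a_{A,B,C} : (A \boxtimes B)\boxtimes C \to A \boxtimes (B \boxtimes C)$ by comparing the two ways of composing universal intertwining operators. The braiding $c_{A,B} : A \boxtimes B \to B \boxtimes A$ is built from the skew-symmetry operation sending an operator $\Y$ of type $\binom{C}{A\,\,B}$ to the operator $(b, z) \mapsto e^{z L_{-1}}\Y(a, e^{\pi i}z)b$ of type $\binom{C}{B\,\,A}$, the monodromy $e^{\pi i}$ being what makes the braiding nontrivial. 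The pentagon and hexagon axioms then follow from the compatibility of these analytic continuations, that is, from the coherence of the $(F, \Om)$ data discussed earlier, now in a setting that need not be grouplike.

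Next I would install the ribbon structure. The twist is $\theta_M = e^{2\pi i L_0^M}$, which on an irreducible module reduces to the scalar $e^{2\pi i \D(M)}$ by the grading (\ref{module.grading}); the balancing identity $\theta_{A\otimes B} = (\theta_A \otimes \theta_B)\circ c_{B,A}\circ c_{A,B}$ follows from the way $L_0$ interacts with the braiding and with the $L_{-1}$-conjugation in its definition. For rigidity the dual $A^*$ is taken to be the contragredient module, with $\ev_A$ and $\coev_A$ built from the canonical intertwining operators $A^* \boxtimes A \to V$ and $V \to A \boxtimes A^*$; verifying the zig-zag identities requires a certain pairing between these operators to be nondegenerate, and this is the first genuinely hard step, resting on Huang's analysis of four-point genus-zero correlation functions rather than on any formal manipulation.

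Finally I would establish modularity and identify the $S$-matrix. Unwinding the definition $s_{A,B} = \tr(c_{B,A}\circ c_{A,B})$ expresses the unnormalised $S$-matrix as a trace of the double braiding, which under Zhu's genus-one theory corresponds to the $S$-transform of the graded dimensions $\chi_M$; comparing with (\ref{Zhu.representation.lattice}) and its general analogue shows that $s$ agrees, up to the normalisation $D$, with Zhu's $\rho_V(S)$. The main obstacle, and the deepest part of the argument, is proving that this $S$-matrix is invertible, equivalently that the category is nondegenerate. I expect this to reduce to Zhu's modular invariance theorem together with the Verlinde formula: one shows that the $S$-transform diagonalises the fusion rules on the finite-dimensional space $\CF(V)$, whence a degenerate $S$-matrix would contradict the linear independence of the characters guaranteed by Zhu. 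This modular invariance input is precisely what upgrades the premodular structure to a genuine modular category.
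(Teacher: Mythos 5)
The paper itself offers no proof of Proposition \ref{Huang.modular.cat.prop}: it is quoted from Huang's work \cite{H08rigidity} (together with the Huang--Lepowsky tensor product papers), so there is no internal argument to compare against. Your sketch is a broadly faithful outline of that external proof: the tensor product as the representing object for $C \mapsto \binom{C}{A\,\,B}$, associativity from analytic continuation between products and iterates of intertwining operators (this is where $C_2$-cofiniteness does the real analytic work, via the differential equations for correlation functions), the braiding from skew-symmetry $e^{zL_{-1}}\Y(\,\cdot\,, e^{\pi i}z)$, and the twist $e^{2\pi i L_0}$ acting as $e^{2\pi i \D(M)}$ on irreducibles are all correctly placed.

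Where your account goes astray is in the logical order and the inputs for the last two steps. In Huang's argument the Verlinde formula is not a consequence of modularity used to check nondegeneracy; it is proved \emph{first}, and rigidity is then \emph{deduced} from it, by showing that the composite $A \to A \boxtimes A^{*} \boxtimes A \to A$ equals the scalar $S_{AV}/S_{VV}$, which the Verlinde formula forces to be nonzero. So rigidity does not rest on genus-zero four-point functions alone, as you suggest, but essentially on genus-one data. Moreover the genus-one input is not Zhu's modular invariance of characters as stated in Section \ref{Section.Zhu}, but the stronger modular invariance of one-point functions of products of intertwining operators (Miyamoto, Huang); your proposed appeal to ``linear independence of the characters'' would actually fail, since distinct irreducible modules can have identical graded dimensions (e.g.\ $M$ and its contragredient $M^{*}$), and one must work with the full trace functions $a \mapsto \tr_M o(a) q^{L_0 - c/24}$ to separate modules. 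Finally, once the categorical $s$-matrix is identified with $\rho_V(S)$ up to normalisation, its invertibility is automatic because $\rho_V$ is a representation of $SL_2(\Z)$ and $S$ is invertible there; no separate nondegeneracy argument is needed at that stage. These are not cosmetic points: they are precisely the ``deep and technical'' content the paper alludes to after the proposition, and a proof organised as you propose would be circular at the rigidity step.
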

Inspection of Huang's constructions reveals {\cite[Proposition 3.3.15]{Sven.thesis}} a precise relation between $c$ and $\Om$, namely
\[
  c_{M, M} = \Om(M, M) \cdot e^{2\pi i \D_M} \cdot \id_{M \otimes M}.
\]
We remark that, while the Verlinde formula is a simple consequence of the axioms of a modular category, in practice it is often proved without appeal to the modular category structure and is used in turn to establish the existence of the latter.
\begin{prop}[{\cite{DLN12}}, {\cite{DJX13}}]
Let $V$ be a tame and positive vertex algebra, and $M$ an irreducible $V$-module.
\begin{itemize}
\item The categorical quantum dimension $d(M)$ of $M$ coincides with its analytic quantum dimension, defined to be
\[
\qdim(M) = \lim_{t \rightarrow 0^+} \frac{\chi_M(it)}{\chi_V(it)}.
\]

\item If the operation $M \boxtimes (-)$ permutes the set of irreducible $V$-modules then the analytic quantum dimension of $M$ is $\qdim(M) = +1$.
\end{itemize}
In particular if $V$ has grouplike fusion then $d(W) = +1$ for every irreducible $V$-module $W$.
\end{prop}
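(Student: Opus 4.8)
The plan is to prove the two displayed bullet points and then derive the concluding statement about grouplike fusion as a corollary. The first bullet, identifying the categorical quantum dimension $d(M)$ with the analytic quantum dimension $\qdim(M)$, is the conceptual heart. Recall from Proposition \ref{Huang.modular.cat.prop} that the $S$-matrix of the modular category of $V$-modules coincides with Zhu's matrix $\rho_V(S)$, which governs the transformation of the characters $\chi_M$ under $\tau \mapsto -1/\tau$. The key idea is to extract the limiting ratio $\lim_{t \to 0^+} \chi_M(it)/\chi_V(it)$ from this modular transformation. First I would write each character in terms of the others under the $S$-transformation, namely $\chi_M(-1/\tau) = \sum_N S_{M,N}\,\chi_N(\tau)$, so that examining $t \to 0^+$ on the left corresponds to $\tau = it \to 0$ while on the right the variable $-1/\tau = i/t \to i\infty$ sends every $\chi_N$ to its leading $q$-expansion term. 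The leading behaviour is dominated by the unique module of minimal $q$-exponent, which for a positive vertex algebra is $V$ itself (with conformal weight $0$; positivity guarantees every other $\rho(N) > 0$). Comparing the leading asymptotics of $\chi_M(it)$ and $\chi_V(it)$ as $t \to 0^+$ then yields $\qdim(M) = S_{M,V}/S_{V,V}$. On the categorical side, the Verlinde formula together with the normalisation $S = s/D$ gives exactly $d(M) = S_{M,V}/S_{V,V}$ as well, so the two quantities agree.

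For the second bullet, suppose $M \boxtimes (-)$ permutes the irreducible modules. This is equivalent to saying $M$ is an invertible object of the modular category: its fusion with any simple object is again simple. In the Grothendieck ring this means $[M]$ acts as a permutation of the basis, hence $\qdim(M)$, being an eigenvalue-type quantity associated to a positive-coefficient structure, must satisfy $\qdim(M)^2 = \qdim(M \boxtimes M^*) = \qdim(V) = 1$, and by positivity of the analytic limit (a ratio of characters with nonnegative Fourier coefficients) we get $\qdim(M) = +1$ rather than $-1$. More directly, invertibility forces $d(M)\,d(M^*) = d(\mathbf{1}) = 1$ with both quantum dimensions equal by self-duality considerations, and positivity of the analytic dimension pins down the sign.

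The concluding claim follows immediately: if $V$ has grouplike fusion then by definition every irreducible $V$-module $W$ satisfies $W \boxtimes W^\vee \cong V$, so $W \boxtimes (-)$ is a bijection on irreducibles (its inverse is $W^\vee \boxtimes (-)$); applying the second bullet gives $d(W) = \qdim(W) = +1$.

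I expect the main obstacle to be the careful justification of the $t \to 0^+$ asymptotics in the first bullet. One must interchange the limit with the (infinite) modular sum and control the subleading terms, verifying that positivity genuinely isolates $V$ as the dominant contribution and that the $\eta$-type prefactors in $\chi_V$ and $\chi_M$ match so that their ratio converges rather than blowing up or vanishing. This is where the hypotheses \emph{tame} and \emph{positive} do the real work, and where one must invoke the analytic estimates of \cite{DLN12} and \cite{DJX13} rather than purely formal manipulation.
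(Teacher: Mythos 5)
The paper does not actually prove this proposition --- it is quoted directly from \cite{DLN12} and \cite{DJX13} --- and your argument reproduces the standard proof found there: the $S$-transformation plus positivity isolates $V$ as the dominant term and yields $\qdim(M)=S_{M,V}/S_{V,V}=d(M)$, while multiplicativity of the quantum dimension together with positivity of the limit (characters at $\tau=it$ are positive series in $e^{-2\pi t}$) forces $\qdim(M)=+1$ for a module permuting the irreducibles. The only flaw is a harmless variable slip --- in $\chi_M(-1/\tau)=\sum_N S_{M,N}\chi_N(\tau)$, evaluating $\chi_M$ at $it$ with $t\to 0^+$ corresponds to $\tau=i/t\to i\infty$ on the right, not $\tau=it\to 0$ --- and, as you correctly note, the uniform control of subleading terms is precisely what the cited references supply.
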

The results above (especially Proposition \ref{Huang.modular.cat.prop}) are deep and technical, but once in place they permit Proposition \ref{trace.conf} to be deduced immediately from the following general fact about pre-modular categories \cite{CKL}.
\begin{prop}[{\cite[Proposition 2.32]{DGNO}}]
Let $X$ be a simple object of a pre-modular category. Then
\[
d(X) \cdot \theta_X = \tr(c_{X, X}).
\]
\end{prop}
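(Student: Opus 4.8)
The plan is to compute the scalar $\tr(c_{X,X})$ directly and identify it with $d(X)\theta_X$. Because $\CC$ is $\C$-linear and semisimple, Schur's lemma gives $\en(X)=\C\,\id_X$, so $\theta_X=\theta\cdot\id_X$ for some scalar $\theta\in\C^\times$; the two sides of the claimed identity then both represent scalars, and it suffices to prove $\tr(c_{X,X})=d(X)\cdot\theta$. The natural language for this is the graphical calculus of braided ribbons referred to above, in which the categorical trace of an endomorphism of $X\otimes X$ is obtained by closing off both strands. Under this calculus $\tr(c_{X,X})$ is the trace-closure of a single crossing of two $X$-strands, which is an unknotted loop carrying one curl; the ribbon axioms should then evaluate this loop to $d(X)\theta_X$.

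To turn the picture into an argument I would proceed in two steps. First, I would establish the partial-trace identity stating that closing off only the right-hand strand of $c_{X,X}$ returns the twist,
\[
\tr_2(c_{X,X})=\theta_X \quad\text{in}\quad \en(X),
\]
where $\tr_2$ denotes the partial trace over the second tensor factor; diagrammatically, closing the right strand of a single crossing produces one $X$-strand bearing a single positive curl, which is exactly $\theta_X$. Second, I would use the fact that in a ribbon category the full categorical trace factors as an iterated partial trace, so that $\tr(f)=\tr(\tr_2(f))$ for every $f\colon X\otimes X\to X\otimes X$. Combining the two steps, and using linearity of the trace together with $\theta_X=\theta\,\id_X$, gives
\[
\tr(c_{X,X})=\tr\!\left(\tr_2(c_{X,X})\right)=\tr(\theta_X)=\theta\cdot\tr(\id_X)=\theta\cdot d(X),
\]
which is the assertion.

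The main obstacle is the honest verification of the partial-trace identity $\tr_2(c_{X,X})=\theta_X$ from the axioms rather than from the picture. This forces one to keep careful track of the rigidity (zig-zag) relations for $\ev_X$ and $\coev_X$, of the naturality of the braiding, and above all of the two ribbon axioms $\theta_{A\otimes B}=(\theta_A\otimes\theta_B)\circ c_{B,A}\circ c_{A,B}$ and $\theta_{A^*}=\theta_A^*$, together with the canonical identification $(X\otimes X)^*\cong X^*\otimes X^*$ that enters the very definition of $\tr$ on $X\otimes X$; these are precisely the ingredients that force the single curl to evaluate to $\theta_X$ and the full trace to split as $\tr\circ\tr_2$. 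Equivalently, in the diagrammatic route the one thing that must not be taken for granted is the soundness of the calculus, i.e. that the morphism assigned to a framed tangle is an isotopy invariant, so that the closed crossing may legitimately be straightened into a loop with one curl. Since this soundness is exactly the output of the cited ribbon-category machinery, I would invoke it and then perform the short computation displayed above.
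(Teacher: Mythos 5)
The paper does not actually prove this proposition --- it is imported verbatim from \cite{DGNO} --- so there is no in-text argument to compare yours against. Your route (write the full trace as the trace of a partial trace, identify $\tr_2(c_{X,X})$ with the twist, then use Schur's lemma to extract the scalar) is precisely the standard ribbon-calculus proof found in the cited sources, so the plan is the right one.

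There is, however, one concrete trap in the step you correctly single out as the crux. The identity $\tr_2(c_{X,X})=\theta_X$ is \emph{not} derivable from the axioms as literally displayed in Section \ref{sec.mod.cat}, because the trace formula given there, $\ev_{A} \circ c_{A, A^*} \circ (f \otimes \id_{A^*}) \circ \coev_A$, makes no reference to the twist: the data $(\ev,\coev,c)$ simply do not determine $\theta$, so no identity forcing a curl to evaluate to $\theta_X$ can follow from them together with the two listed twist axioms alone. Super vector spaces make this concrete: the odd line $X$ with the parity twist $\theta_X=-\id_X$ satisfies both twist axioms, yet the displayed formula gives $\tr(c_{X,X})=-1$ and $d(X)=-1$, while $d(X)\,\theta_X=+1$. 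Both your partial-trace lemma and the proposition itself become true once the closing cap carries the twist, i.e.\ once the trace is the genuine ribbon trace
\[
\tr(f)\,\id_{\mathbf{1}}=\ev_{A}\circ c_{A,A^*}\circ\bigl((\theta_A\circ f)\otimes\id_{A^*}\bigr)\circ\coev_A ,
\]
which is the convention of \cite{DGNO} and of the ribbon-graphical calculus you invoke. With that correction your computation $\tr(c_{X,X})=\tr(\tr_2(c_{X,X}))=\tr(\theta_X)=\theta\cdot d(X)$ goes through, the identity $\tr=\tr\circ\tr_2$ follows from $(X\otimes X)^*\cong X^*\otimes X^*$ and multiplicativity of the twisted evaluation, and the ``honest verification'' of $\tr_2(c_{X,X})=\theta_X$ reduces to the zig-zag relations and naturality of the braiding. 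So keep the argument, but build the twist into the duality data first rather than trying to conjure it out of the two balancing axioms.
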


\section{The Schur Indicator}\label{sec.schur.indic}

Let $G$ be a finite group, and $M$ a self-contragredient irreducible $G$-module defined over $\C$. The isomorphism $M \rightarrow M^\vee$ is the same thing as a nondegenerate bilinear form $(\cdot,\cdot) : M \times M \rightarrow \C$. This form must be either symmetric or else alternating, and in the respective cases we define the \emph{Schur indicator} of $M$ to be $+1$ or else $-1$. Elementary character theory shows that
\[
\nu_M = \frac{1}{\#{G}} \sum_{g \in G} \chi_M(g^2).
\]
The notion of Schur indicator makes sense much more generally, for example for Lie algebras, for vertex algebras, and for modular categories.

Let $\g$ be a finite dimensional simple Lie algebra, and $L(\la)$ a finite dimensional irreducible $\g$-module. It is well known that $L(\la)^\vee \cong L(-w^\circ \la)$ where $w^\circ$ is the longest element of the Weyl group of $\g$. A result of Tits \cite{AG07indicator} states that if $L(\la)$ is self-contragredient then its Schur indicator is
\[
  \nu_{L(\la)} = (-1)^{2 \la(\rho^\vee)},
\]
where $\rho^\vee$ is the dual Weyl vector.

The \emph{contragredient} {\cite[Section 5]{B86}} of a vertex algebra module $M$ is by definition the graded dual vector space $M^*$, equipped with the vertex operation
\begin{align}\label{contragredient.def}
\left<Y(u, z)\varphi, m\right> = \left<\varphi, Y^*(u, z)m\right> \quad \text{where} \quad Y^*(u, z) = Y(e^{zL_1}(-z^{-2})^{L_0}u, z^{-1}).
\end{align}
This rather odd looking definition is natural from the algebro-geometric point of view on vertex algebras in which, in a certain precise sense, the modules $M$ and $M^\vee$ ought to be viewed as being situated at the points $0$ and $\infty$ of the Riemann sphere \cite[Chapter 10]{FBZ}. The contragredient of a $g$-twisted module is defined similarly and is a $g^{-1}$-twisted module. The Schur indicator of a self-contragredient module is then defined as before.

The following proposition is proved by an elementary vertex algebra calculation.
\begin{prop}\label{FSvsSymmetry}
  Let $V$ be a tame vertex algebra, and $M$ a self-contragredient irreducible $V$-module. Suppose the conformal weight $\D(M)$ is a half integer. Then
  \[
    \Om(M, M) = \nu_M \cdot e^{2\pi i \D(M)}.
    \]
\end{prop}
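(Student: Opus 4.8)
\emph{Proof strategy.} The plan is to realise the invariant bilinear form underlying the Schur indicator $\nu_M$ as the ``vacuum component'' of the self-intertwining operator of $M$, and then to extract both $\nu_M$ and $\Om(M,M)$ from the skew-symmetry relating the two orderings of that operator. Since $M$ is irreducible and self-contragredient, $V$ occurs exactly once in $M\boxtimes M$, so the fusion space $\binom{V}{M\,\,M}$ is one-dimensional; fix a nonzero $\Y=\Y^{M,M}$ in it. Because $\D(M)$ is a half-integer, $\varepsilon=2\D(M)-\D(V)$ is an integer, so $\Y$ takes values in $V((z))$ with only integral powers of $z$. This integrality is exactly what the hypothesis buys, and it is the reason the final phase is a sign rather than a general root of unity.

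First I would build the form. Writing $\rho=\rho(M)$ and $M=\bigoplus_i M_{\rho+i}$, let $\pi_0$ denote the projection $V\to V_0=\C\mathbf{1}$. For homogeneous $m\in M_{\rho+i}$, $n\in M_{\rho+j}$ the series $\pi_0\,\Y(m,z)n$ is a single monomial $B(m,n)\,z^{p}$, and $B$ is a nonzero multiple of the invariant bilinear form realising $M\cong M^\vee$; thus $B(n,m)=\nu_M\,B(m,n)$. Since $L_0$ is self-adjoint for the contragredient pairing, $B$ vanishes unless $i=j$, and a direct count of $L_0$-degrees then gives $p=-2\rho-2i$. Nondegeneracy supplies a pair $m,n$ of equal degree with $B(m,n)\neq0$.

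Next I would invoke skew-symmetry. The operator $\Y^{\mathrm{op}}(n,z)m:=e^{zL_{-1}}\,\Y(m,e^{\pi i}z)n$ again lies in $\binom{V}{M\,\,M}$, hence is a scalar multiple of $\Y$; by construction that scalar is the commutativity constant $\Om(M,M)$, so $e^{zL_{-1}}\Y(m,e^{\pi i}z)n=\Om(M,M)\,\Y(n,z)m$. Applying $\pi_0$ is the crux: because $L_{-1}$ raises $L_0$-degree by one and $V$ is non-negatively graded, projecting to $V_0$ annihilates every term of $e^{zL_{-1}}$ except the identity, so $\pi_0\circ e^{zL_{-1}}=\pi_0$ and the translation factor disappears. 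On the left the substitution $z\mapsto e^{\pi i}z$ multiplies the monomial by $(e^{\pi i})^{p}=(-1)^{-2\rho-2i}=(-1)^{2\rho}=e^{2\pi i\D(M)}$, using $2\rho\in\Z$. Comparing the two sides yields $e^{2\pi i\D(M)}B(m,n)=\Om(M,M)B(n,m)=\Om(M,M)\nu_M B(m,n)$, and cancelling $B(m,n)\neq0$ together with $\nu_M^{-1}=\nu_M$ gives $\Om(M,M)=\nu_M\,e^{2\pi i\D(M)}$.

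The point requiring care is conceptual rather than computational. Because $\varepsilon\in\Z$ forces every quantity in sight to be a sign, the two branches $e^{\pm\pi i}z$ of the skew-symmetry formula agree on the relevant integral power and the whole phase collapses to $(-1)^{2\rho}$; the result is therefore insensitive to the branch and direction conventions that usually plague such computations. What must instead be verified is that the scalar extracted from $\Y\mapsto\Y^{\mathrm{op}}$ is exactly the constant $\Om(M,M)$ of Section \ref{section.ab.grp.coho} and not, say, the categorical self-braiding $c_{M,M}=\Om(M,M)e^{2\pi i\D(M)}\id$ of Proposition \ref{Huang.modular.cat.prop} (which differs by precisely the factor being isolated), and that the pairing $B$ read off from $\pi_0\Y$ genuinely represents the isomorphism $M\cong M^\vee$ whose symmetry defines $\nu_M$. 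I expect this identification, rather than any phase-chasing, to be the main obstacle.
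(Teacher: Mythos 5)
Your argument is correct and is essentially the proof the paper has in mind: the paper does not write it out but delegates exactly this computation to \cite[Proposition 5.6.1]{FHL} ($\D(M)\in\Z$) and \cite[Lemma 2.1]{Y12} ($\D(M)\in\tfrac12+\Z$), both of which proceed by the same skew-symmetry-plus-vacuum-projection calculation you describe. The two caveats you flag yourself (that the skew-symmetry scalar is $\Om(M,M)$ in the convention of Section \ref{section.ab.grp.coho}, and that $\pi_0\Y$ represents the invariant form) do check out: the $e^{zL_1}(-z^{-2})^{L_0}$ factor in the adjoint only rescales $B$ by a nonzero constant on each diagonal graded piece, and since $\varepsilon\in\Z$ forces $\Om(M,M)^2=1$ the convention ambiguity is harmless.
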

The case $\D(M) \in \Z$ is {\cite[Proposition 5.6.1]{FHL}}, while the case $\D(M) \in \frac{1}{2}+\Z$ is {\cite[Lemma 2.1]{Y12}}.

Consider a tame vertex algebra $V$ with grouplike fusion and fusion group $E$, and let $M$ be an irreducible $V$-module as in Proposition \ref{FSvsSymmetry}. Then $V$ and $M$ determine a subgroup $I$ of $E$ isomorphic to $\Z/2$ and the restriction of the canonical class $\om_V \in H^3_{\text{ab}}(E, \C^\times)$ to $I$ is trivial. Consequently the direct sum $V \oplus M$ carries the structure of a vertex algebra extending that of $V$.

It is straightforward to extend the reasoning above and obtain a proof of Theorem \ref{orb.theorem} for automorphisms of odd order or twice odd order. To prove the theorem in all cases it seems to be necessary to employ the much deeper results of Section \ref{sec.mod.cat}.

\section{Schur Indicator for Lattice Involutions}\label{indic.lattice}

Let $A$ be a group. We recall that central extensions
\begin{align}\label{cent.ext}
1 \rightarrow \C^\times \rightarrow \HH \rightarrow A \rightarrow 1,
\end{align}
of $A$ by $\C^\times$ are classified up to isomorphism by the group cohomology $H^2(A, \C^\times)$ with coefficients in the trivial module $\C^\times$. If $A$ is a finite abelian group then \cite{Yamazaki} the homomorphism
\begin{align*}
  H^2(A, \C^\times) &\rightarrow {\textstyle \bigwedge^2} \Hom_\Z({A}, \C) \\
  [\psi] &\mapsto B, \quad \text{defined by} \quad B(\al, \beta) = \frac{\psi(\al,\beta)}{\psi(\beta,\al)},
\end{align*}
is an isomorphism.

Now let $L$ be a lattice, which we assume to be even and self dual for convenience, and let $\sigma$ be an automorphism of $L$ of order $n$. We consider the sublattices
\[
M = (1-\sigma)L \quad \text{and} \quad N = L \cap (M \otimes_\Z \Q),
\]
both of which are orthogonal to the fixed point sublattice $L^\sigma \subset L$.

Let $A$ be the finite abelian group $N/M$, and let $\HH$ be the central extension (\ref{cent.ext}) associated as above with the alternating form 
\[
B(\al, \beta) = (-1)^{(\al,\beta)}.
\]
If $L$ is self dual then $\HH$ possesses a unique irreducible module $X$, whose construction we now describe. Let $I \subset A$ be a maximal isotropic subspace with respect to the form $B$. Then we have an exact sequence
\[
0 \rightarrow I \rightarrow A \rightarrow \widehat{I} \rightarrow 0,
\]
where $\widehat{I}$ is the Pontryagin dual of $I$ and the projection here sends $\al \in A$ to the character $\chi_\al : i \mapsto B(\al, i)$.

We choose a section $s : \widehat{I} \rightarrow A$ of the projection, and put $X = \C[I]$ with an action of $\HH$ defined by
\[
  e^i \cdot x = {i+x} \quad \text{and} \quad e^{s(\chi)} \cdot x = \chi(x) x.
\]

Let us consider the vertex algebra $V_L$ now, and denote the standard lift of $\sigma$ to $V_L$ also by $\sigma$. The twisted module $V_L(\sigma)$ is constructed as a Fock space over a twisted Heisenberg algebra \cite{dong.lep.96} \cite{BK06}. The lowest graded piece of $V_L(\sigma)$ is a copy of $X$ in which the action of $\left|\al\right>_{(0)}$ for $\al \in N$ coincides with the action of $e^\al \in \HH$.

Now we consider an involution $\sigma$ of $L$ that satisfies
\begin{align}\label{no.order.doubling}
(\al, \sigma \al) \in 2\Z \quad \text{for all $\al \in L$}.
\end{align}
This condition guarantees that the standard lift of $\sigma$ is also an involution. Formula (\ref{lattice.conformal.weight}) implies that the standard lift of $\sigma$ is of type $2\{0\}$ only if the rank of $L^\sigma$ is divisible by $8$. So we assume this from now on.

Now let $(\cdot,\cdot)$ be a nondegenerate $V_L$-invariant bilinear form on $V_L(\sigma)$. We consider its restriction to the lowest graded piece, which we have identified with the $\HH$-module $X$. Knowing that the conformal weight of $\left|\al\right> \in V_L$ is $\al^2/2$, it is straightforward to deduce from (\ref{contragredient.def}) that 
\begin{align}\label{invariance.lowest.piece}
  (e^\al x, y) = (-1)^{\al^2/2}(x, e^\al y)
\end{align}
for all $\al \in A$ and all $x, y \in X$. (See {\cite[equation (4.30)]{BK06}}.)

A simple but key observation is contained in the following lemma.
\begin{lemma}
  The rescaled lattice $M(1/2)$ is even, and its discriminant form is isomorphic to $A$ equipped with the quadratic form $q$ defined by
  \[
q(\al) = \al^2/4 \pmod{1}.
\]
\end{lemma}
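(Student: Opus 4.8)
The plan is to separate the two assertions, as the first is a quick consequence of the hypotheses while the second carries the real content. Evenness of $M(1/2)$ is immediate: any element of $M$ has the form $\al = (1-\sigma)x$ with $x \in L$, and since $\sigma$ is an isometry and the form is symmetric one computes
\[
\al^2 = (x,x) - 2(x,\sigma x) + (\sigma x, \sigma x) = 2\bigl[(x,x) - (x,\sigma x)\bigr].
\]
Evenness of $L$ gives $(x,x) \in 2\Z$, while the standing hypothesis (\ref{no.order.doubling}) gives $(x, \sigma x) \in 2\Z$; hence $\al^2 \in 4\Z$, which says precisely that the rescaled norm $\tfrac12 \al^2$ is an even integer. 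This is the only place the hypothesis (\ref{no.order.doubling}) is used, and it is exactly what guarantees that the discriminant quadratic form of $M(1/2)$ takes values in $\Q/\Z$.

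The substance of the lemma lies in identifying the discriminant form, and the key claim I would isolate is that $M(1/2)^\vee = N$; once this is known, the discriminant group $M(1/2)^\vee/M(1/2)$ equals $N/M = A$, since $M(1/2)$ and $M$ have the same underlying abelian group. To prove the claim I would use that $M \otimes_\Z \Q$ is the $(-1)$-eigenspace of $\sigma$, so that $\sigma v = -v$ for every $v \in M \otimes_\Z \Q$. For such a $v$ and any $\al = (1-\sigma)x \in M$, the eigenspace identity together with $\sigma$ being an involutive isometry gives $(v, \sigma x) = (\sigma v, x) = -(v,x)$, whence
\[
(v, \al) = (v, x) - (v, \sigma x) = 2(v,x).
\]
Thus $v$ lies in $M(1/2)^\vee$, i.e.\ $(v,\al) \in 2\Z$ for all $\al \in M$, if and only if $(v,x) \in \Z$ for all $x \in L$, i.e.\ if and only if $v \in L^\vee$. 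Since $L$ is self dual this reads $v \in L$, and combined with $v \in M\otimes_\Z\Q$ it gives $v \in N$; the reverse inclusion $N \subseteq M(1/2)^\vee$ follows from the same computation, since $(v,x)\in\Z$ automatically when $v \in N \subseteq L$.

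For the quadratic form I would invoke the standard fact that the discriminant form of an even lattice sends a coset to one half the norm of a representative. Applying this to $M(1/2)$, whose bilinear form is $\tfrac12(\cdot,\cdot)$, a class represented by $\al \in N$ is sent to $\tfrac12\cdot\tfrac12(\al,\al) = \al^2/4 \pmod 1$, as claimed; well-definedness modulo $1$ is guaranteed by the identification $N = M(1/2)^\vee$, since replacing $\al$ by $\al + \mu$ with $\mu \in M$ alters $\al^2/4$ by $\tfrac12(\al,\mu) + \tfrac14\mu^2 \in \Z$, using $(\al,\mu)\in 2\Z$ and $\mu^2 \in 4\Z$. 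The one genuinely nontrivial step is the identity $M(1/2)^\vee = N$, and within it the direction that forces $v$ into $L$: this is precisely where self duality $L^\vee = L$ enters, and it is the crux of the argument. Everything else is formal bookkeeping with the eigenspace decomposition.
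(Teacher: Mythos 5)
Your proof is correct and follows essentially the same route as the paper's: both hinge on the identity $(v,(1-\sigma)x)=2(v,x)$ for $v$ in the $(-1)$-eigenspace together with self duality of $L$, your statement $M(1/2)^\vee=N$ being just a rephrasing of the paper's $N=2M^\vee$. You simply spell out the evenness computation and the well-definedness of $q$, which the paper leaves implicit.
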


\begin{proof}
  That $M(1/2)$ is an even lattice follows from condition (\ref{no.order.doubling}). The rest will follow once we show that $N = 2M^\vee$. Now for any $m \in M$ and $n \in N$ we may write $m = x - \sigma x$ for some $x \in L$, and obtain $(n, m) = (n-\sigma n, x) = 2(n, x) \in 2\Z$. For any $y \in M^\vee$ we have $\Z \ni (y, x-\sigma x) = 2(y, x)$ for all $x \in L$. Hence $y \in \frac{1}{2}L$ and so $2y \in N$.
\end{proof}

To construct the $\HH$-module $X$ we used a maximal isotropic subgroup $I \subset A$. If $I$ happens to be totally isotropic with respect to $q$ then we easily deduce that $(\cdot, \cdot)$ is symmetric. Indeed let $x, y \in I \subset \C[I] = X$ and put $\al = x-y$. Then by assumption $4|\al^2$, and so
\[
(x, y) = (e^\al y, y) = (y, e^\al y) = (y, x).
\]
A discriminant form which possesses a totally isotropic maximal isotropic subgroup is called \emph{split}. So we should like to show that $(A, q)$ is split.

Any discriminant form can be decomposed into a finite direct sum of indecomposable Jordan components {\cite[Section 2]{Scheithauer}}. Since every element of $A$ is of order $2$ and $q(\al) \in \{0, 1/2\}$ the only components that can appear in the decomposition of $A$ are those of type $2_\text{II}^{+2}$ and $2_\text{II}^{-2}$. We now recall the definitions of these two types. As groups $2_\text{II}^{\pm 2} = \Z/2 \times \Z/2$, while the quadratic forms are as shown below.
\begin{center}
\begin{tikzpicture}

\tikzset{
quangleprod/.style={
    matrix of nodes,
    column sep=-\pgflinewidth, row sep=-\pgflinewidth,
    nodes={draw,
      minimum height=#1,
      anchor=center,
      text width=#1,
      align=center, 
      inner sep=0pt 
    },
  },
  quangleprod/.default=1cm
}

\matrix[quangleprod]
{
$0$ & $1/2$ \\
$1/2$ & $1/2$ \\
};

\node at (0, -1.6) {$2_{\text{II}}^{-2}$};

\matrix[quangleprod] at (6,0)
{
$0$ & $0$ \\
$0$ & $1/2$ \\
};

\node at (6, -1.6) {$2_{\text{II}}^{+2}$};
\end{tikzpicture}
\end{center}
The decomposition of a discriminant form is not unique in general, indeed
\begin{align}\label{2.component.switch}
\left( 2_\text{II}^{+2} \right)^{2} \cong \left( 2_\text{II}^{-2} \right)^{2}.
\end{align}






Now we recall the invariant $\ga_2$ of a discriminant form defined by
\begin{align}\label{signature.relation}
\ga_2(D) = e^{2\pi i \sign(L) / 8}
\end{align}
where $D$ is the discriminant form of the lattice $L$, and $\sign(L)$ is the signature of $L$. This invariant is multiplicative, i.e., $\ga_2(D_1 \oplus D_2) = \ga_2(D_1)\ga_2(D_2)$, and satisfies $\ga_2(2_\text{II}^{\pm 2}) = \pm 1$.

We return now to $A = N / M$ the discriminant form of $M(1/2)$. Since $M$ is positive definite and its rank is divisible by $8$, we have $\ga_2(A) = +1$. It follows that the number of copies of $2_\text{II}^{-2}$ in the decomposition of $A$ must be even. But then the relation (\ref{2.component.switch}) implies that $A$ decomposes into a sum of some number of copies of $2_\text{II}^{+2}$. Since $2_\text{II}^{+2}$ is evidently split, so is $A$.

We summarise this discussion in the following theorem.
\begin{thm}\label{QuotientIsSplit}
  Let $L$ be an even self dual lattice, and $\sigma$ an involution of $L$ such that the rank of the fixed point sublattice $L^\sigma$ is divisible by $8$, and such that $(\al, \sigma\al) \in 2\Z$ for all $\al \in L$. Let $\HH$ be the finite Heisenberg group associated with $L$ and $\sigma$ as above, and $X$ the unique irreducible $\HH$-module. Then the unique up to scalar nondegenerate invariant bilinear form on $X$ is symmetric.
\end{thm}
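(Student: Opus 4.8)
The plan is to reduce the symmetry of the invariant form to the lattice-theoretic assertion that the discriminant form $(A, q)$ is \emph{split}, and then to establish splitness from the signature of $M(1/2)$.

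The decisive first step is to observe that the Schur indicator is governed by a single choice. Recall that $X = \C[I]$ is built from a subgroup $I \subset A$ that is maximal isotropic for the form $B$. I would show that it suffices to choose this $I$ to be in addition totally isotropic for $q$, i.e.\ $q|_I = 0$. Indeed, for any two basis vectors $x, y \in I$ the difference $\al = x - y$ again lies in $I$, so $q(\al) = 0$, that is $\al^2 \in 4\Z$, and hence the sign $(-1)^{\al^2/2}$ appearing in the invariance relation (\ref{invariance.lowest.piece}) equals $+1$. Since $e^\al y = x$, this yields $(x,y) = (e^\al y, y) = (y, e^\al y) = (y,x)$ on a spanning set of $X$, so the form is symmetric. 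Everything thus reduces to producing a maximal isotropic subgroup of $A$ on which $q$ vanishes, i.e.\ to showing that $(A,q)$ is split.

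To analyse $(A,q)$ I would invoke the preceding lemma, which identifies it with the discriminant form of the even positive definite lattice $M(1/2)$, with $q(\al) = \al^2/4 \pmod 1$. Two elementary facts constrain its Jordan components. First, $\sigma$ acts as $-1$ on $N$, so $2\al = (1-\sigma)\al \in (1-\sigma)L = M$ for every $\al \in N$, whence every element of $A$ has order dividing $2$. Second, each representative $\al \in N$ lies in the even lattice $L$, so $\al^2 \in 2\Z$ and therefore $q(\al) = \al^2/4 \in \{0, \tfrac12\}$. Consequently only the even components $2_{\text{II}}^{+2}$ and $2_{\text{II}}^{-2}$ can occur in the decomposition of $A$. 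I would then note that $\rank M = \rank L - \rank L^\sigma \equiv 0 \pmod 8$, using the hypothesis on $L^\sigma$ and the self duality of $L$; since $M(1/2)$ is positive definite, its signature is divisible by $8$, so (\ref{signature.relation}) gives $\ga_2(A) = +1$. Multiplicativity together with $\ga_2(2_{\text{II}}^{\pm2}) = \pm 1$ then forces the number of $2_{\text{II}}^{-2}$ summands to be even.

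Finally I would use the exceptional isomorphism (\ref{2.component.switch}) to trade the $2_{\text{II}}^{-2}$ summands, in pairs, for $2_{\text{II}}^{+2}$ summands --- possible precisely because there is an even number of them --- so that $A$ becomes an orthogonal sum of copies of $2_{\text{II}}^{+2}$. As $2_{\text{II}}^{+2}$ plainly contains an order-$2$ subgroup on which $q$ vanishes that is maximal isotropic, and as splitness is preserved under orthogonal sums, $(A,q)$ is split; taking $I$ to be the resulting Lagrangian finishes the proof. I expect the splitness step to be the main obstacle: the construction of $X$ supplies only a subgroup isotropic for $B$, and upgrading it to one totally isotropic for the finer invariant $q$ is impossible in general. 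It succeeds here solely because of the vanishing $\ga_2(A) = +1$ --- into which the divisibility of $\rank L^\sigma$ by $8$ feeds --- combined with the non-uniqueness (\ref{2.component.switch}) of the Jordan decomposition.
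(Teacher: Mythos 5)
Your proposal is correct and follows essentially the same route as the paper: reduce symmetry of the form to choosing $I$ totally isotropic for $q$ via the computation $(x,y)=(e^\al y,y)=(y,e^\al y)=(y,x)$, identify $(A,q)$ with the discriminant form of $M(1/2)$, restrict the Jordan components to $2_{\text{II}}^{\pm 2}$, and use $\ga_2(A)=+1$ together with the relation $\left(2_{\text{II}}^{+2}\right)^2 \cong \left(2_{\text{II}}^{-2}\right)^2$ to conclude splitness. The only additions are your explicit verifications that $\sigma=-1$ on $N$ forces $2A=0$ and that $\rank M = \rank L - \rank L^\sigma \equiv 0 \pmod 8$, which the paper leaves implicit.
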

It follows immediately that if the lattice $L$ and its involution $\sigma$ satisfy the conditions of Theorem \ref{QuotientIsSplit} then the Schur indicator of the unique $\sigma$-twisted $V_L$-module $V_L(\sigma)$ is $\nu = +1$.

Now let 
\[
V_L(\sigma) = V_L(\sigma)^+ \oplus V_L(\sigma)^-
\]
denote the decomposition of $V_L(\sigma)$ into its $\Z$-graded and $(\tfrac{1}{2}+\Z)$-graded parts, respectively. The two pieces are irreducible $V_L^\sigma$-modules with Schur indicator $\nu = +1$. Proposition \ref{FSvsSymmetry} implies that the braiding coefficient $\Om$ of $V_L(\sigma)^\pm$ is $\pm 1$. Thus the direct sum $V_L^\sigma \oplus V_L(\sigma)^+$ acquires the structure of a vertex algebra. By the same token the direct sum $V_L^\sigma \oplus V_L(\sigma)^-$ acquires the structure of a vertex superalgebra.

\newpage

\bibliographystyle{plain}
\bibliography{refs-orb}

\end{document}